\def\pr{\partial}
\def\<{\langle}
\def\>{\rangle}
\def\c{\cdot}
\def\l{\lambda}
\def\ci{\circ}
\newcommand{\bmx}{\begin{pmatrix}}
\newcommand{\emx}{\end{pmatrix}}
\newtheorem{thm}{Theorem}[section]
\newtheorem{lem}[thm]{Lemma}
\newtheorem{cor}[thm]{Corollary}
\newtheorem{pro}[thm]{Proposition}
\newtheorem{ex}[thm]{Example}
\theoremstyle{definition}
\newtheorem{defi}{Definition}[section]
\theoremstyle{remark}
\newtheorem{rmk}{Remark}[section]
\begin{document}
\title{\bf Jacobi-Jordan conformal algebras: Basics, Constructions and related structures}
\author{T. Chtioui, S. Mabrouk, A. Makhlouf}
\author{{ Taoufik Chtioui$^{1}$
 \footnote{ E-mail: chtioui.taoufik@yahoo.fr}
, Sami Mabrouk$^{2}$
 \footnote{  E-mail: mabrouksami00@yahoo.fr}
, Abdenacer Makhlouf$^{3}$
 \footnote{  E-mail: Abdenacer.Makhlouf@uha.fr (Corresponding author)}
}\\
{\small 1.  University of Gabes, Faculty of Sciences Gabes,
 City Riadh 6072 Zrig, Gabes, Tunisia.} \\
{\small 2.  University of Gafsa, Faculty of Sciences, 2112 Gafsa, Tunisia}\\
{\small 3.~ Université de Haute Alsace, IRIMAS - Département de Mathématiques,} \\ {\small 18, rue des frères Lumière,
F-68093 Mulhouse, France}}
 \maketitle

\begin{abstract}
The main purpose of  this paper is to introduce and investigate the notion of Jacobi-Jordan conformal  algebra. They are a generalization of  Jacobi-Jordan algebras which correspond to the case in which the formal parameter  $\l$ equals $0$. We consider some related structures such as conformal modules, corresponding representations and $\mathcal{O}$-operators. Therefore,  conformal derivations from  Jacobi-Jordan conformal algebras to their conformal
modules are used to describe conformal derivations of Jacobi-Jordan conformal
algebras of semidirect product type. 
Moreover, we study a class of Jacobi-Jordan conformal algebras called quadratic Jacobi-Jordan conformal algebras, which are characterized by mock-Gel'fand Dorfman bialgebras.
Finally, the  $\mathbb{C}[\pr]$-split extending structures problem for Jacobi-Jordan conformal algebras is studied. Furthermore,  we introduce an  unified product of a given Jacobi-Jordan conformal
algebra $J$ and a given  $\mathbb{C}[\pr]$-module $K$. This product includes some other interesting products of Jacobi-Jordan conformal algebras such as twisted product or crossed product.
Using this product, a cohomological type object is constructed to provide a theoretical answer to the  $\mathbb{C}[\pr]$-split extending structures problem.

\end{abstract}
\textbf{Keywords}: Jacobi-Jordan conformal algebra,  conformal module,  mock-Gel'fand-Dorfman bialgebra, unified product, extending structure.\\
\textbf{Mathematics Subject Classification}  (2020): 17A30, 17A60, 17B69. 
\numberwithin{equation}{section}

\tableofcontents

\section{Introduction}

In the jungle of non-associative algebras, {\bf Jacobi-Jordan algebras} 
are rather special objects. A Jacobi-Jordan algebra is a vector space $J$
together with a commutative bilinear map $\bullet : J \times J \to J$ such
that   $$a\bullet (b\bullet c)  + b\bullet (c\bullet a)  + c\bullet (a\bullet b)  = 0$$ for all $a$, $b$, $c\in J$.
According to \cite{zu2}, where a detailed history of Jacobi-Jordan algebras is given and
several conjectures are proposed, this family
of algebras was first defined in \cite{Zh} and since then they have been studied independently in various papers \cite{BB0, BB,BB1,BB2, KarimaTaoufikAtefSami, BF, Camacho,
GK, Ok, wa, Buse, Zh2} under different names such as Lie-Jordan
algebras, Jordan algebras of nilindex $3$, pathological
algebras, mock-Lie algebras or Jacobi-Jordan algebras. Throughout this paper, we shall adopt the name
Jacobi-Jordan algebra.

An algebraic formalization of the properties of the operator product expansion (OPE) in two-dimensional conformal field theory \cite{BPZ} gave rise to a new class of algebraic systems, vertex operator algebras \cite{BO,F2}. The notion of a Lie conformal algebra encodes the singular part of the
OPE which is responsible for the commutator of two
chiral fields \cite{KAC}. Roughly speaking, Lie conformal algebras correspond to vertex algebras
by the same way as Lie algebras correspond to their associative enveloping algebras.


The structure theory of finite (i.e., finitely
generated as $\mathbb C[\partial]$-modules) associative and Lie conformal algebras was developed in \cite{DK} and
later generalized in \cite{BDK} for pseudoalgebras over a wide class of cocommutative
Hopf algebras. From the algebraic point of view, the notions of conformal algebra \cite{DK}, their representations \cite{CK}
and cohomologies \cite{BKV,D,KK} are higher-level analogues of the ordinary notions in
the pseudo-tensor category \cite{BD2} associated with the polynomial Hopf algebra
(see \cite{BDK} for a detailed explanation).

Some features of the structure theory of conformal algebras (and their representations)
of infinite type were also considered in a series of works \cite{BKL1,BKL2,DK2,KAC2,Re1,Re2,Z1,Z2}. In this field,
one of the most relevant problem is to describe the structure of conformal
algebras with faithful irreducible representation of finite type (these algebras could be
of infinite type themselves). In \cite{BKL1,KAC2},  conjectures on the structure of such algebras
(associative and Lie) were stated. They were proved  under some additional conditions in \cite{BKL1,DK,Z2}. Another relevant problem is to classify simple and semisimple conformal algebras of linear
growth (i.e., of Gel'fand-Kirillov dimension one).
This problem was solved for finitely generated associative conformal algebras which
contain a unit \cite{Re1,Re2}, or at least an idempotent \cite{Z1,Z2}. A structure theory of associative conformal algebras with finite faithful representation similar to those examples of conformal algebras stated in these papers was developed in \cite{KO}. 

The  split extending structures problem 
for some classical algebra objects such as groups, associative algebras, Hopf algebras,
Lie algebras, Leibniz algebras and left-symmetric algebras have been studied in \cite{am-2011, am0-2014, am-2014, am-2015, am-2019, H2}
respectively. In fact, this problem generalizes two important algebra problems. For associative and  Lie
conformal algebras, the first one is the $\mathbb C[\pr]$-split extension problem   introduced by Y.Y. Hong in \cite{H1, H3}, while the second one is the $C[\partial]$-split factorization problem.


The main purpose of  this paper is to introduce and investigate the notion of Jacobi-Jordan conformal  algebra. It
 is organized as follows. In Section \ref{JJConformal}, we introduce a definition  of Jacobi-Jordan conformal algebra and give some basic results. We study $\mathcal O$-operators on Jacobi-Jordan algebras and  symplectic Jacobi-Jordan algebras. In Section \ref{Characterization}, we provide  a  characterization of quadratic Jacobi-Jordan conformal algebras through Mock-Gel’fand Dorfman bialgebras and
present various constructions. The  $\mathbb{C}[\pr]$-split extending structures problem for Jacobi-Jordan conformal algebras is studied in Section \ref{ExtendingStructures}. We introduce a definition of unified product of a given a Jacobi-Jordan conformal
algebra and consider some interesting sub-classes like twisted and crossed products of Jacobi-Jordan conformal algebras.

Throughout the paper, all algebraic systems are supposed to be over a field $\mathbb{F}$
of characteristic $0$.  We denote by $\mathbb{Z}_{+}$ the set of all nonnegative integers and by $\mathbb{Z}$ the set of all integers.
\section{Jacobi-Jordan  conformal algebras and their modules}\label{JJConformal}

In this section, we provide some basic notions of (left) anti-associative and Jacobi-Jordan conformal algebras along with their conformal modules.   We introduce the notion of $\mathcal O$-operator on Jacobi-Jordan conformal algebras with respect a given module as a generalization of Rota-Baxter operators. 

\subsection{Definitions and Basic results}

A conformal algebra is a vector space $A$ equipped with a linear map $\pr: A \to A$ and a countable family of bilinear operations $-_{(n)}-: A\times A \to A, n \in \mathbb{Z}_+$, satisfying the following axioms:
\begin{itemize}
  \item [(i)] For every $a,b \in A$, there exists $N \in \mathbb{Z}_+$ such that $a_{(n)}b=0$  for all $n \geq N$,
  \item [(ii)] $\pr a_{(n)} b=-na_{(n-1)} b$,
  \item [(iii)] $a_{(n)} \pr b=\pr(a_{(n)} b)+ na_{(n-1)} b$.
\end{itemize}
Every conformal algebra $A$ is a left module over the polynomial algebra $\mathbb{F}[\pr]$.
The structure of a conformal algebra on an $\mathbb{F}[\pr]$-module $A$ may be expressed by means of a single polynomial-valued map called $\l$-product
$$-_\l -: A \times A \to A[\l],\quad a_\l b=\sum_{n \in \mathbb{Z}_+} \l^{(n)} a_{(n)} b,$$
where $\l $ is a formal variable and  $\l^{(n)}=\l^n/n!$.  In terms of the $\l$-product, the definition of a conformal algebra is as follows.
\begin{defi}
A conformal algebra  is a module $A$ over the polynomial algebra  $\mathbb{F}[\pr]$
equipped with a polynomial-valued $\mathbb{F}$-linear operation
$A \times A \to A[\lambda]$ denoted by $a \times b \to a_\lambda b$,
where $\l$  is a formal variable (this operation is called $\l$-product), satisfying the
following axioms
\begin{align}\label{conf alg}
   & \partial a_\lambda b=-\lambda a_\lambda b,\ \ a_\lambda \partial b=(\lambda+\partial)a_\lambda b, \hspace{2 cm} (\textrm{Conformal sesquilinearity}) .\end{align}
\end{defi}
A conformal algebra is said to be finite if it is finitely generated as a module
over $\mathbb{F}[\pr]$.
The rank of a finite conformal algebra $A$ is its rank as a  $\mathbb{F}[\pr]$-module.

Let $(A,\cdot_\l)$ be a conformal algebra.  Then by conformal sesquilinearity, the following equalities  hold true
\begin{align}\label{lemma}
    (a_{-\l -\pr} b)_{\l+\mu}c=&(a_\mu b)_{\l+\mu} c,\\
   a_\mu  (b_{-\l -\pr} c)=& a_\mu  (b_{-\l -\mu-\pr} c),\\
   (a_{-\mu-\pr } b)_{-\l-\pr} c=& (a_{-\l-\mu-\pr}b)_{-\l-\pr} c
\end{align}
for all $a,b ,c \in A$. \\
We prove the first identity by the following computation.
\begin{align*}
 (a_{-\lambda-\partial}b )_{\lambda+\mu} c& =\bigg(\sum_{k=0}^n \frac{(-\l-\pr)^k}{k!}
a_{(k)}b \bigg)_{\l+\mu} c \\
&=\sum_{k=0}^n \frac{(-\l+\l+\mu)^k}{k!}((a_{(k)}b)_{\l+\mu}c) \quad \textrm{(by conformal sesquilinearity) }\\
&=\bigg(\sum_{k=0}^n\frac{\mu^k}{k!}a_{(k)}b \bigg)_{\l+\mu}c
=(a_{\mu} b)_{\lambda+\mu}c.
\end{align*}

Let $M, N$ be $\mathbb{F}[\pr]$-modules.  A conformal linear map from $M$ to $N$ is a sequence $f=\{f_{(n)}\}_{n \geqslant 0}$ of $f_{(n)} \in Hom_\mathbb{F}(M,N)$ satisfying 
\begin{align*}
    \pr  f_{(n)} -f_{(n)} \pr =-n f_{(n-1)},\quad \forall n \in \mathbb{N}.
\end{align*}
In particular, $f_{(0)}$ is an $\mathbb{F}[\pr]$-module homomorphism.  Set $f_\l= \displaystyle \sum_{k=0}^{\infty} \frac{\l^n}{n!} f_{(n)}$. Then   $f=\{f_{(n)}\}_{n \geqslant 0}$ is a conformal linear map if and only if $f_\l :M \to N[\l]$  is $\mathbb{F}$-linear and $f_\l \pr=(\l+\pr)f_\l$.

We denote by  $Chom(M,N) $  the set of conformal linear maps from $ M$ to $ N$. It turns out that 
$Chom(M,N)$ is a $\mathbb{F}[\pr]$-module via:
$$\pr f_{(n)}=-nf_{(n-1)}, \quad \text{equivalently},\quad \pr f_\l =-\l f_\l . $$
The composition $fg : L \to  N$ of conformal linear maps $f : M \to  N$  and $g : L \to  M$
is given by
\begin{align*}
    (fg)_{(n)}=\sum_{k=0}^n \begin{pmatrix}
                              n \\
                              k \\
                            \end{pmatrix}
        f_{(k)}g_{(n-k)},   \quad \text{equivalently},\quad (f_\l g)_{\l+\mu}=f_\l g_\mu.
\end{align*}
If $M$  is a finitely generated $\mathbb{F}[\pr]$-module, then $Cend(M) := Chom(M,M)$  is an
associative conformal algebra with respect to the above composition. Thus, $Cend(M)$ becomes a Lie conformal algebra, denoted as $gc(M)$,
with respect to the following $\l$-bracket:
\begin{align*}
[f_\l g]_\mu=f_\l g_{\mu-\l}-g_{\mu-\l}f_\l,   \quad \text{equivalently},\quad [f_\l g]=f_\l g-g_{-\l-\pr}f.
\end{align*}
Define the conformal dual of an $\mathbb{F}[\pr]$-module $M$ as $M^{*c}=Chom(M,\mathbb{F})$, that is 
$$M^{*c}=\{f: M \to \mathbb{F}[\l]\; | \; f\ \textrm{is }\  \mathbb{F}\textrm{-linear and}\ f_\l(\pr m)=\l f_\l(m)\} .     $$        

\begin{lem}\label{Lemme one}
Let $f,g \in Cend(M)$, then for any $m \in M$ we have 
\begin{itemize}
    \item [(1)] $f_\l (g_{-\mu-\pr}m)=(f_\l g)_{-\mu-\pr}m$,
    \item [(2)] $f_{-\l-\pr}(g_\mu m)=(f_{-\mu-\pr}g)_{-\l-\pr+\mu} m $,
    \item [(3)] $f_{-\l-\pr}(g_{\mu-\pr}  m)=(f_{-\l-\pr+\mu}g)_{-\mu-\pr} m $.
\end{itemize}
\end{lem}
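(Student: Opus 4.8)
The three identities are all of the same flavor: each equates a "nested" application of conformal linear maps with a "composed" application, where the subtlety lies in tracking how the formal variables and the operator $\pr$ shift. The master tool is the composition formula $(f_\l g)_{\l+\mu} = f_\l g_\mu$ together with sesquilinearity $f_\l \pr = (\l+\pr)f_\l$, and the repeated-use consequences recorded in \eqref{lemma}. The plan is to prove (1) first, since it is the cleanest, and then to reduce (2) and (3) to (1) by formal substitutions in the variables.

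For (1): I would start from the composition identity written as $(f_\l g)_{\l+\mu}m = f_\l(g_\mu m)$, which is an identity of polynomials in $\l$ and $\mu$ applied to $m$. The claim $f_\l(g_{-\mu-\pr}m) = (f_\l g)_{-\mu-\pr}m$ should follow by substituting $\mu \mapsto -\mu-\pr$ in a suitable sense; more carefully, one expands $g_{-\mu-\pr}m = \sum_k \frac{(-\mu-\pr)^{(k)}}{?}g_{(k)}m$ (using the $n$-product expansion as in the displayed computation after \eqref{lemma}), applies $f_\l$ termwise, uses sesquilinearity $f_\l \pr = (\pr+\l)f_\l$ to move the $\pr$'s through $f_\l$, and then recognizes the result as the expansion of $(f_\l g)_{-\mu-\pr}m$. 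This is the same bookkeeping as the sample computation proving the first identity in \eqref{lemma}, just with an extra outer $f_\l$.

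For (2) and (3): these are obtained from (1) by playing the variables $\l, \mu$ against each other using the $\mathbb{F}[\pr]$-module structure on $Cend(M)$, namely $\pr f_\l = -\l f_\l$ (equivalently $(\pr f)_\l = -\l f_\l$). The key move is that an expression like $f_{-\l-\pr}$, where the $\pr$ is the one acting on $M$ from the right, can be re-read via sesquilinearity; and terms of the form $(f_{-\mu-\pr}g)$ should be unwound using the composition rule in the form $(f_{-\mu-\pr}g)_{\nu}m = f_{?}(g_{?}m)$ after matching the shifted arguments, exactly as in the second and third lines of \eqref{lemma}. Concretely, for (2) I would write out both sides in terms of $n$-products and check equality of the coefficient of each monomial in $\l,\mu$; for (3) the same, noting that $g_{\mu-\pr}$ carries the extra $-\pr$ which one again pushes through using $f_{-\l-\pr}\pr = $ (the appropriate shift).

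The main obstacle I anticipate is purely notational rather than conceptual: keeping straight which $\pr$ is which (the one internal to a conformal map versus the one acting on the module element $m$), and making sure the substitutions $\mu \mapsto -\mu-\pr$ etc.\ are justified as identities of operator-valued polynomials rather than just formal symbol-pushing. Once the sesquilinearity relations are applied in the right order, all three identities collapse to the single composition law, so no genuinely new input is needed beyond what is already established before the lemma.
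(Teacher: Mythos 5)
Your plan is correct and is exactly the ``straightforward computation'' the paper itself invokes without writing out: expand in $n$-products, use the composition law $(f_\l g)_{\l+\mu}=f_\l g_\mu$ together with the sesquilinearity relations $f_\l \pr=(\l+\pr)f_\l$ and $(\pr f)_\l=-\l f_\l$ to track the shifts of $\pr$, and compare coefficients. No further input is needed, so your approach coincides with the paper's.
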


The proof follows from a straightforward computation.

\begin{defi}
Let $A$ be a conformal algebra.  A conformal linear map $d \in Cend(A)$ is a conformal anti-derivation on $A$ if for all $a,b \in A$, 
\begin{align}\label{anti-der}
    d_\l(a_\mu b)=-(d_\l a)_{\l+\mu} b-a_\mu(d_\l b).
\end{align}
\end{defi}

Now, we introduce the notion of Jacobi-Jordan conformal algebra. It generalizes Jacobi-Jordan algebras, which correspond to   $\l=0$. On the other hand, this variety can be viewed as   as a variety of commutative  Leibniz conformal algebras. 

\begin{defi}\label{JJ conformal alg}
A  Jacobi-Jordan conformal algebra is a conformal algebra $(A,\cdot_\lambda)$ such that
the following conditions hold:
\begin{align}
& a_\lambda b= b_{-\l-\pr}a, \label{commutativity} \\
&a_\l(b_\mu c)+(a_\l b)_{\l+\mu}c+b_\mu(a_\l c)=0 , \label{conf Jacobi identity}
\end{align}
for any $a,b,c \in A$. 
\begin{rmk}
If we take in the above definition, $\l=\mu=0$ and $\pr=0$, then we recover  the definition of a Jacobi-Jordan algebra. 
\end{rmk}

\end{defi}
We  consider a  conformal linear map $d_\l : A \to  Cend(A)$ defined by $d_\l(a)(b)=a_\l b$. Then, Eq. \eqref{conf Jacobi identity} is equivalent to say that $d_\l (a)$ is a conformal anti-derivation, for any $a \in A$.

\begin{ex}
Consider one of the simplest  (through important) examples of Jacobi-Jordan  conformal algebras.
Let $(A,\c)$ be a Jacobi-Jordan  algebra.  Then we  naturally define a  Jacobi-Jordan  conformal algebra $\textrm{Cur} A=\mathbb{F}[\pr]\otimes A$ with the $\l$-product
$$a_\l b=a\c b,~~~\forall a,b \in A.$$
$\textrm{Cur} A$ is called the current  Jacobi-Jordan  conformal algebra associated with $A$.
\end{ex}

\begin{ex}
Let $A$ be an associative commutative algebra and $(B,\c_\l)$ be a Jacobi-Jordan  conformal algebra.  Then the tensor product $A \otimes B$ becomes a Jacobi-Jordan  conformal algebra via
\begin{align*}
\pr(a \otimes x)=a \otimes \pr x, ~~~ (a\otimes x)_\l (b \otimes y)=(ab) \otimes (x_\l y).
\end{align*}

\end{ex}

\begin{defi}
An anti-associative conformal algebra is a conformal algebra $(A,\cdot_\lambda)$ satisfying
\begin{align}
(a_\l b)_{\l+\mu}c+ a_\l(b_\mu c)=0,
\end{align}
for all $a,b,c \in A$.
\end{defi}

Similarly as left symmetric conformal algebras are closely related to Lie conformal algebras, we introduce the notion of left anti-symmetric conformal algebras whose commutator gives rise  to a Jacobi-Jordan conformal algebra.

\begin{defi}
A conformal algebra $(A,\c_\l)$ is called  left anti-symmetric conformal algebra if for all $a,b ,c \in A$, one has
\begin{align}\label{left anti symm}
    (a_\l b)_{\l+\mu} c+ a_\l (b_\mu c) +     (b_\mu a)_{\l+\mu} c+ b_\mu (a_\l c)=0.
\end{align}
\end{defi}
It is obvious that every anti-associative conformal algebra is  left anti-symmetric.
A conformal algebra $(A, \cdot_\l)$ is said to be Jacobi-Jordan admissible if $(A,\circ_\l)$ is a Jacobi-Jordan conformal algebra, where $a\circ_\l b= a_\l b +b_{-\l-\pr} a$, for all $a,b \in A$. 

\begin{pro}
Let $(A,\cdot_\l)$ be a left anti-symmetric  conformal algebra. 
Then $A$ is a Jacobi-Jordan admissible conformal algebra. 

\end{pro}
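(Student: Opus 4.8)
The plan is to check directly that the symmetrized product $a\circ_\l b=a_\l b+b_{-\l-\pr}a$ makes $A$ into a Jacobi-Jordan conformal algebra. That $\circ_\l$ is again conformal sesquilinear, so that $(A,\circ_\l)$ is a conformal algebra, is routine and follows immediately from the sesquilinearity of $\cdot_\l$ together with the definition of the $-\l-\pr$ shift. The commutativity axiom \eqref{commutativity} is also easy: expanding $b\circ_{-\l-\pr}a$ by the definition of $\circ$ and simplifying the iterated shift by means of conformal sesquilinearity \eqref{conf alg} yields exactly $a_\l b+b_{-\l-\pr}a=a\circ_\l b$.

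The substance of the proof is the conformal Jacobi identity \eqref{conf Jacobi identity}. I would expand
$$a\circ_\l(b\circ_\mu c)+(a\circ_\l b)\circ_{\l+\mu}c+b\circ_\mu(a\circ_\l c)$$
into the twelve terms obtained by substituting $x\circ_\g y=x_\g y+y_{-\g-\pr}x$ at each occurrence of $\circ$. The key observation is that, once each term is brought to a canonical form $(u_\a v)_{\a+\b}w$ or $u_\a(v_\b w)$, these twelve terms split into three groups of four, each of which is precisely the left-hand side of the left anti-symmetry relation \eqref{left anti symm} for one of the triples $(a,b,c)$, $(a,c,b)$, $(b,c,a)$ (with suitably chosen formal parameters). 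The first group is transparent: it consists of $a_\l(b_\mu c)$, $(a_\l b)_{\l+\mu}c$, $b_\mu(a_\l c)$ and $(b_{-\l-\pr}a)_{\l+\mu}c$, the last of which equals $(b_\mu a)_{\l+\mu}c$ by \eqref{lemma}; hence this group vanishes by \eqref{left anti symm} applied to $(a,b,c)$. For the remaining eight terms one proceeds in the same way, but now one must first move, using the identities in \eqref{lemma} together with Lemma \ref{Lemme one}(1)--(3), each $\pr$ occurring in a subscript $-\l-\pr$, $-\mu-\pr$ or $-\l-\mu-\pr$ into the position needed to read off the pattern $\{(u_\a v)_{\a+\b}w,\ u_\a(v_\b w),\ (v_\b u)_{\a+\b}w,\ v_\b(u_\a w)\}$ of \eqref{left anti symm}, first for $(a,c,b)$ and then for $(b,c,a)$. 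After these reductions all three groups vanish and the total is $0$.

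The only real difficulty is bookkeeping: one must keep careful track of which element each $\pr$ inside a subscript acts on and invoke \eqref{lemma} and Lemma \ref{Lemme one} in exactly the right places so that every one of the twelve terms acquires the shape occurring in \eqref{left anti symm}. A useful consistency check, in which this difficulty disappears, is the specialization $\l=\mu=0$, $\pr=0$: there $\circ$ reduces to the symmetrization of an ordinary left anti-symmetric product, and the twelve terms, namely $a(bc)$, $a(cb)$, $(bc)a$, $(cb)a$, $(ab)c$, $(ba)c$, $c(ab)$, $c(ba)$, $b(ac)$, $b(ca)$, $(ac)b$, $(ca)b$, do cancel in the three groups described above. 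Once the three instances of \eqref{left anti symm} have been correctly identified, the sum of the twelve terms is manifestly zero, so $(A,\circ_\l)$ satisfies \eqref{conf Jacobi identity}; together with \eqref{commutativity} this shows that $(A,\circ_\l)$ is a Jacobi-Jordan conformal algebra, i.e.\ $A$ is Jacobi-Jordan admissible.
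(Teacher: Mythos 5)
Your proposal is correct and follows essentially the same route as the paper: substitute $a\circ_\l b=a_\l b+b_{-\l-\pr}a$, expand the conformal Jacobi expression into twelve terms, and cancel them in groups of four as instances of \eqref{left anti symm} after normalizing the $\pr$-shifted subscripts by the sesquilinearity identities \eqref{lemma}. The three groups you indicate (for the triples $(a,b,c)$, $(a,c,b)$, $(b,c,a)$) are exactly how the paper's term-by-term computation cancels, so the argument matches the paper's proof.
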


\begin{proof}
Let $a,b,c \in A$. Then  $b\ci_{-\l-\pr} a= b_{-\l-\pr}a+a_\l b=a\ci_\l b$. On the other hand,
\begin{align*}
&a \ci_\l(b \ci _\mu c)+(a \ci_\l b)\ci_{\l+\mu} c+ b \ci_\mu(a \ci_\l c) \\
=&
a\ci_\l(b_\mu c+c_{-\mu-\pr} b)+(a_\l b+b_{-\l-\pr} a)\ci_{\l+\mu}c +b\ci_\mu(a_\l c+c_{-\l-\pr} a)  \\
=& a_\l(b_\mu c+c_{-\mu-\pr} b)+(b_\mu c+c_{-\mu-\pr} b)_{-\l-\pr }a + (a_\l b+b_{-\l-\pr} a)_{\l+\mu}c \\
+& c_{-\l-\mu-\pr}(a_\l b+b_{-\l-\pr} a)+b_\mu (a_\l c+c_{-\l-\pr} a)+(a_\l c+c_{-\l-\pr} a)_{-\mu-\pr}b \\
=& (a_\l b)_{\l+\mu}c+ a_\l(b_\mu c)+ (b_{-\l-\pr}a)_{\l+\mu} c+ b_\mu(a_\l c) \\
+& (a_\l c)_{-\mu-\pr} b+a_\l (c_{-\mu-\pr} b)+ (c_{-\l-\pr} a)_{-\mu-\pr} b+
c_{-\l-\mu-\pr} (a_\l b) \\
=&0. 
\end{align*}
Then $(A,\ci_\l)$ is a Jacobi-Jordan conformal algebra. 
\end{proof}

\begin{cor}
Any anti-associative conformal algebra is a Jacobi-Jordan admissible  conformal algebra.
\end{cor}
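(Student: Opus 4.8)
The plan is to deduce the corollary directly from the preceding proposition. The key observation is that the proposition already establishes that every left anti-symmetric conformal algebra is Jacobi-Jordan admissible, so it suffices to show that any anti-associative conformal algebra is left anti-symmetric, which was remarked right before the proposition (``It is obvious that every anti-associative conformal algebra is left anti-symmetric'').

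First I would recall the defining identity of a left anti-symmetric conformal algebra, namely
\begin{align*}
    (a_\l b)_{\l+\mu} c+ a_\l (b_\mu c) +     (b_\mu a)_{\l+\mu} c+ b_\mu (a_\l c)=0.
\end{align*}
Now, if $(A,\cdot_\l)$ is anti-associative, then $(a_\l b)_{\l+\mu} c+ a_\l (b_\mu c)=0$, and applying the same identity with the roles of the two leftmost entries interchanged (i.e.\ to the pair $(b,a)$ with parameters $\mu,\l$) gives $(b_\mu a)_{\l+\mu} c+ b_\mu (a_\l c)=0$. Adding these two relations yields exactly the left anti-symmetry identity. Hence $(A,\cdot_\l)$ is left anti-symmetric.

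Applying the proposition, $(A,\cdot_\l)$ is therefore Jacobi-Jordan admissible, which is the assertion of the corollary. I do not expect any genuine obstacle here: the only thing to be a little careful about is bookkeeping of the spectral parameters when transposing the arguments in the anti-associativity relation, making sure that the combined $\l+\mu$ appearing in the first slot of the outer product is symmetric in $\l$ and $\mu$ so that both specializations of the anti-associative axiom can be summed cleanly.
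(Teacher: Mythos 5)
Your argument is correct and follows exactly the route the paper intends: anti-associativity immediately gives left anti-symmetry (summing the axiom with $(a,\l)$ and $(b,\mu)$ interchanged, the outer index $\l+\mu$ being symmetric), and then the preceding proposition yields Jacobi-Jordan admissibility. This matches the paper's implicit proof, which relies on its remark that every anti-associative conformal algebra is left anti-symmetric together with that proposition.
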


Similarly, we  have the following proposition. 
\begin{pro}
Any Jacobi-Jordan conformal algebra is Jacobi-Jordan admissible. 
\end{pro}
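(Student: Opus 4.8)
The plan is to show that if $(A,\cdot_\l)$ is already a Jacobi-Jordan conformal algebra, then the "symmetrized" product $a\ci_\l b = a_\l b + b_{-\l-\pr}a$ is again a Jacobi-Jordan conformal algebra. The first thing I would check is commutativity of $\ci_\l$: using conformal sesquilinearity one has $b\ci_{-\l-\pr}a = b_{-\l-\pr}a + a_\l b = a\ci_\l b$, exactly as in the proof of the previous proposition, so \eqref{commutativity} holds for $\ci_\l$ with no use of the Jacobi-Jordan axioms at all. Note also that since $\cdot_\l$ is itself commutative, $a\ci_\l b = 2\,a_\l b$, so in fact $\ci_\l$ is just a rescaling of $\cdot_\l$; I would either remark this immediately (which makes the statement essentially trivial once one observes $b_{-\l-\pr}a = a_\l b$), or, to keep the computation parallel to the left-anti-symmetric case and not rely on that shortcut, carry out the expansion directly.

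Taking the direct route, the key step is to expand the conformal Jacobi expression for $\ci_\l$:
\begin{align*}
& a\ci_\l(b\ci_\mu c) + (a\ci_\l b)\ci_{\l+\mu}c + b\ci_\mu(a\ci_\l c)\\
=&\ a\ci_\l\big(b_\mu c + c_{-\mu-\pr}b\big) + \big(a_\l b + b_{-\l-\pr}a\big)\ci_{\l+\mu}c + b\ci_\mu\big(a_\l c + c_{-\l-\pr}a\big),
\end{align*}
then expand each remaining $\ci$ into two $\cdot_\l$-terms. This produces a sum of twelve conformal monomials in $\cdot_\l$. I would then reorganize them into three groups of four, each group being a copy of the left-hand side of \eqref{conf Jacobi identity} (the conformal Jacobi identity for $\cdot_\l$) evaluated at an appropriate permutation/substitution of $a,b,c$ and of the spectral parameters. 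Concretely, one group reads $a_\l(b_\mu c)+(a_\l b)_{\l+\mu}c + b_\mu(a_\l c)$, which is $0$ by \eqref{conf Jacobi identity}; the other two groups arise by rewriting terms such as $(b_\mu c)_{-\l-\pr}a$, $c_{-\l-\mu-\pr}(a_\l b)$, $(a_\l c)_{-\mu-\pr}b$ etc.\ using commutativity \eqref{commutativity} and the sesquilinearity identities \eqref{lemma} so that each becomes a genuine instance of \eqref{conf Jacobi identity}, hence vanishes.

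The main obstacle — really the only place where care is needed — is the bookkeeping of the spectral parameters when one uses \eqref{commutativity} to flip a product like $c_{-\l-\mu-\pr}(a_\l b)$ into $(a_\l b)_{?}c$: one must track the $\pr$-shift correctly and invoke the identities \eqref{lemma} (for instance $(a_{-\l-\pr}b)_{\l+\mu}c = (a_\mu b)_{\l+\mu}c$) to bring the flipped terms into the canonical form appearing in \eqref{conf Jacobi identity}. Once the three groupings are identified, each vanishes by the conformal Jacobi identity for $\cdot_\l$, and therefore $(A,\ci_\l)$ satisfies \eqref{conf Jacobi identity}; together with the commutativity checked above, this shows $(A,\ci_\l)$ is a Jacobi-Jordan conformal algebra, i.e.\ $(A,\cdot_\l)$ is Jacobi-Jordan admissible. $\qed$
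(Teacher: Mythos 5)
Your proposal is correct, and the shortcut you mention is the whole story: since $\cdot_\l$ is commutative, $a\ci_\l b=a_\l b+b_{-\l-\pr}a=2\,a_\l b$, so $\ci_\l$ is a rescaling of $\cdot_\l$ and both axioms \eqref{commutativity} and \eqref{conf Jacobi identity} follow at once; the paper itself offers no written proof (it only says ``Similarly''), and this observation is exactly what makes the statement immediate. One cosmetic slip in your alternative direct route: the twelve monomials organize into \emph{four} copies of the three-term expression in \eqref{conf Jacobi identity} (after using commutativity and the identities \eqref{lemma}), not three groups of four --- the four-term grouping belongs to the left anti-symmetric identity \eqref{left anti symm} used in the preceding proposition --- but this does not affect the validity of your argument, since the rescaling observation already suffices.
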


Now, we introduce a notion of a representation of a Jacobi-Jordan conformal algebra and give some examples. 
\begin{defi}
An $\mathbb{F}[\pr]$-module $M$ is a conformal module of
a Jacobi-Jordan conformal  algebra $A$  if there is an $\mathbb{F}$-linear map  $\pi: A \to Cend(M)$  satisfying
the following condition: For all $a, b  \in A$,
\begin{align}
& \pi(\pr a)_\l= -\l \pi(a)_\l ,\label{rep1}\\
&   \pi(a_\l  b)_{\l+\mu}=-\pi(a)_\l \pi(b)_\mu -\pi(b)_\mu \pi(a)_\l . \label{rep2}
\end{align}

\end{defi}

\begin{pro}
Given a finite dimensional complex Jacobi-Jordan algebra $A$.  Let $\rho : A  \to End(M)$  be a finite dimensional representation of $A$.  Then the free $\mathbb{C}[\pr]$-module $\mathbb{C}[\pr] \otimes M$   is a conformal module  of $Cur\ A$, where the module structure $\pi:  Cur\ A \to Cend(\mathbb{C}[\pr] \otimes M)$ is given by
\begin{align*}
\pi(f(\pr) \otimes a)_\l (g(\pr) \otimes m) =f(-\l)g(\l+\pr)\otimes  (\rho(a)m).
\end{align*}
\end{pro}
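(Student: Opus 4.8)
The claim is that $\pi(f(\pr)\otimes a)_\l(g(\pr)\otimes m) = f(-\l)g(\l+\pr)\otimes(\rho(a)m)$ defines a conformal module structure on $\mathbb{C}[\pr]\otimes M$ over $\mathrm{Cur}\,A$. There are three things to verify: that each $\pi(f(\pr)\otimes a)$ is actually a conformal endomorphism of $\mathbb{C}[\pr]\otimes M$ (i.e. lands in $Cend(\mathbb{C}[\pr]\otimes M)$ and satisfies the sesquilinearity $\pi(x)_\l\pr = (\l+\pr)\pi(x)_\l$), then axiom \eqref{rep1} (the $\pr$-linearity in the first slot), and finally axiom \eqref{rep2} (the defining compatibility with the $\l$-product of $\mathrm{Cur}\,A$).

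First I would check that $\pi$ takes values in $Cend(\mathbb{C}[\pr]\otimes M)$: fixing $x = f(\pr)\otimes a$, the map $\pi(x)_\l$ sends $g(\pr)\otimes m$ to $f(-\l)g(\l+\pr)\otimes\rho(a)m$, which is polynomial in $\l$, and applying it to $\pr(g(\pr)\otimes m) = \pr g(\pr)\otimes m$ gives $f(-\l)(\l+\pr)g(\l+\pr)\otimes\rho(a)m = (\l+\pr)\bigl(\pi(x)_\l(g(\pr)\otimes m)\bigr)$, which is exactly $\pi(x)_\l\pr = (\l+\pr)\pi(x)_\l$; so $\pi(x)$ is a conformal linear map. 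Axiom \eqref{rep1} is then immediate: replacing $f(\pr)$ by $\pr f(\pr) = $ (multiplication by the variable) gives the substitution $f(\pr)\mapsto (-\l)f(-\l)$, hence $\pi(\pr f(\pr)\otimes a)_\l = -\l\,\pi(f(\pr)\otimes a)_\l$. Also, since $\pr$ on $\mathrm{Cur}\,A = \mathbb{C}[\pr]\otimes A$ acts on the first factor, \eqref{rep1} genuinely encodes $\pi(\pr\cdot x)_\l = -\l\pi(x)_\l$ for all $x\in\mathrm{Cur}\,A$.

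The substantive step is \eqref{rep2}. Using the $\l$-product of $\mathrm{Cur}\,A$, namely $(f(\pr)\otimes a)_\l(h(\pr)\otimes b)$; I would recall that by conformal sesquilinearity the $\l$-product of a current conformal algebra on generators $a,b\in A$ is $a_\l b = a\cdot b$, so $(f(\pr)\otimes a)_\l(h(\pr)\otimes b) = f(-\l)h(\l+\pr)\otimes(a\cdot b)$. Then, on one hand,
\begin{align*}
\pi\bigl((f(\pr)\otimes a)_\l(h(\pr)\otimes b)\bigr)_{\l+\mu}\bigl(g(\pr)\otimes m\bigr)
&= f(-\l)h(\mu+\pr)\,g(\l+\mu+\pr)\otimes\rho(a\cdot b)m,
\end{align*}
where I used that substituting $\pr\mapsto\l+\mu+\pr$ into $h(\l+\pr)$ gives $h(\mu+\pr)$. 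On the other hand, computing $\pi(f(\pr)\otimes a)_\l\pi(h(\pr)\otimes b)_\mu(g(\pr)\otimes m)$ via the composition rule for conformal linear maps: the inner application gives $h(-\mu)g(\mu+\pr)\otimes\rho(b)m$, and the outer one substitutes $\pr\mapsto\l+\pr$ in the $\mathbb{C}[\pr]$-part and multiplies by $f(-\l)$... here I must be careful to use the correct convention for how composition of $\l$-endomorphisms acts; tracking the variable shifts carefully yields $f(-\l)h(-\mu)\,\text{(something in }\l,\mu,\pr)\otimes\rho(a)\rho(b)m$ — and symmetrically for the $b,a$ term. Adding the two and invoking $\rho(a)\rho(b) + \rho(b)\rho(a) = -\rho(a\cdot b)$, which is precisely the defining identity of a representation of the Jacobi-Jordan algebra $A$ (the conformal representation axiom \eqref{rep2} at $\l=\mu=\pr=0$), should produce the left-hand side. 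The main obstacle I anticipate is purely bookkeeping: getting the polynomial substitutions $\pr\mapsto\l+\pr$, $\pr\mapsto\mu+\pr$, $\pr\mapsto\l+\mu+\pr$ to match on both sides of \eqref{rep2}, since the composition formula $(f_\l g)_{\l+\mu} = f_\l g_\mu$ interacts nontrivially with the $f(-\l)$, $g(\l+\pr)$ factors; once the variable shifts are pinned down the algebra collapses to the ordinary representation identity for $\rho$.
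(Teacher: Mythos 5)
Your overall strategy coincides with the paper's: check the axioms by direct computation and reduce \eqref{rep2} to the Jacobi--Jordan representation identity $\rho(a\cdot b)=-\rho(a)\rho(b)-\rho(b)\rho(a)$ (you even add the sesquilinearity check and \eqref{rep1}, which the paper omits). However, the one evaluation you actually carry out in the key step is wrong, and it is precisely the bookkeeping you then dismiss as routine. When you evaluate $\pi\bigl((f(\partial)\otimes a)_\lambda (h(\partial)\otimes b)\bigr)_{\lambda+\mu}$ on $g(\partial)\otimes m$, the polynomial $f(-\lambda)h(\lambda+\partial)$ attached to the \emph{acting} element must be evaluated at $\partial=-(\lambda+\mu)$ --- this is the sesquilinearity rule $\pi(\partial x)_\nu=-\nu\,\pi(x)_\nu$ --- giving $f(-\lambda)h(-\mu)$; the shift $\partial\mapsto(\lambda+\mu)+\partial$ applies only to the polynomial $g$ of the module element. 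So the left-hand side is $f(-\lambda)\,h(-\mu)\,g(\lambda+\mu+\partial)\otimes\rho(ab)m$, not $f(-\lambda)\,h(\mu+\partial)\,g(\lambda+\mu+\partial)\otimes\rho(ab)m$ as you wrote; your stated justification (``substituting $\partial\mapsto\lambda+\mu+\partial$ into $h(\lambda+\partial)$'') would in any case produce $h(2\lambda+\mu+\partial)$, not $h(\mu+\partial)$. With your value the two sides of \eqref{rep2} cannot agree as polynomials, so the verification fails as written.

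The repair is immediate and is exactly what the paper computes. The unspecified ``something in $\lambda,\mu,\partial$'' on the composition side is $g(\lambda+\mu+\partial)$: the inner application gives $h(-\mu)g(\mu+\partial)\otimes\rho(b)m$, and the outer one multiplies by $f(-\lambda)$ and shifts $\partial\mapsto\lambda+\partial$, yielding $f(-\lambda)h(-\mu)g(\lambda+\mu+\partial)\otimes\rho(a)\rho(b)m$, with the symmetric term $h(-\mu)f(-\lambda)g(\lambda+\mu+\partial)\otimes\rho(b)\rho(a)m$. All three terms in \eqref{rep2} then carry the common factor $f(-\lambda)h(-\mu)g(\lambda+\mu+\partial)$, and the identity collapses, as in the paper, to $\rho(ab)=-\bigl(\rho(a)\rho(b)+\rho(b)\rho(a)\bigr)$. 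So the approach is sound and matches the paper's, but as submitted the central step contains a concrete evaluation error that must be corrected before the proof is complete.
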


\begin{proof}
For any $a,b \in A, m \in M$ and $f(\pr), g(\pr), h(\pr) \in \mathbb{F}[\pr]$, we have 
\begin{align*}
\pi(f(\pr)a)_\l \pi(g(\pr)b)_\mu(h(\pr)m)=& \pi(f(\pr)a)_\l\Big(g(-\mu)h(\pr+\mu)\rho(b)m \Big) \\
=& f(-\l)g(-\mu)h(\pr+\l+\mu) \rho(a)\rho(b) m,
\end{align*}
and
\begin{align*}
\pi(g(\pr)b)_\mu \pi(f(\pr)a)_\l(h(\pr)m)=& \pi(g(\pr)b)_\mu\Big(f(-\l)h(\pr+\l)\rho(a)m \Big) \\
=& g(-\mu)f(-\l)h(\pr+\l+\mu) \rho(b)\rho(a) m.
\end{align*}
Then 
\begin{align*}
\pi(f(\pr)a_\l g(\pr)b)_{\l+\mu} (h(\pr)m) =&
\pi(f(-\l)g(\pr+\l)ab)_{\l+\mu}(h(\pr)m) \\
=& f(-\l)g(-\mu)h(\l+\mu+\pr) \rho(ab)m \\
=& -f(-\l)g(-\mu)h(\l+\mu+\pr) \big(\rho(a)\rho(b)m+\rho(b)\rho(a) m \big) \\
=&-\pi(f(\pr)a)_\l \pi(g(\pr)b)_\mu(h(\pr)m)- \pi(g(\pr)b)_\mu \pi(f(\pr)a)_\l(h(\pr)m).
\end{align*}
Hence the proof.

\end{proof}

\begin{pro}
Let $(A,\c_\l)$ be a Jacobi-Jordan conformal algebra, $M$ an $A$-conformal module and $\pi: A \to \mathbb{F}[\pr] \otimes Cend(M): \ r \mapsto \pi(r)_\l $ the corresponding representation.  Define the following $\l$-product  on $A \oplus M$ :
\begin{align}\label{semi direct product}
     & (a+ m)_\l (b+n)=
       a_\l b+
      \pi(a)_\l n + \pi(b)_{-\l-\pr} m  .
\end{align}
Then $A \oplus M$  becomes a Jacobi-Jordan  conformal algebra with respect to the above $\l$-product,
called the semidirect product of $A$ and $M$, and denoted as $A\ltimes M$. Moreover, $M$ is an abelian ideal of  $A\ltimes M$.
\end{pro}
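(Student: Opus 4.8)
The plan is to verify the two defining axioms of a Jacobi-Jordan conformal algebra, namely conformal commutativity \eqref{commutativity} and the conformal Jacobi identity \eqref{conf Jacobi identity}, for the $\l$-product \eqref{semi direct product} on $A \oplus M$, together with the conformal sesquilinearity relations; the abelian-ideal claim will then be immediate. First I would check that $A \oplus M$ is a conformal algebra at all: sesquilinearity $\pr(a+m)_\l(b+n) = -\l (a+m)_\l(b+n)$ and $(a+m)_\l \pr(b+n) = (\l+\pr)(a+m)_\l(b+n)$ follow componentwise — on the $A$-part from sesquilinearity in $A$, and on the $M$-part from \eqref{rep1} together with the fact that each $\pi(r)$ lies in $Cend(M)$ (so $\pi(r)_\l \pr n = (\l+\pr)\pi(r)_\l n$) and from the $\mathbb{F}[\pr]$-linearity of $r \mapsto \pi(r)$.

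Next, conformal commutativity: I must show $(a+m)_\l(b+n) = (b+n)_{-\l-\pr}(a+m)$. The $A$-component reduces to \eqref{commutativity} in $A$. The $M$-component of the left side is $\pi(a)_\l n + \pi(b)_{-\l-\pr} m$, while the $M$-component of the right side is $\pi(b)_{-\l-\pr} m + \pi(a)_{-(-\l-\pr)-\pr} n = \pi(b)_{-\l-\pr} m + \pi(a)_\l n$, so the two agree. For the conformal Jacobi identity I would expand $x_\l(y_\mu z) + (x_\l y)_{\l+\mu} z + y_\mu(x_\l z)$ with $x = a+m$, $y = b+n$, $z = c+p$ and collect terms by how many summands come from $M$. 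The purely-$A$ part is exactly \eqref{conf Jacobi identity} in $A$. The terms linear in $p$ (i.e.\ the genuine "module action" terms) assemble, after using \eqref{rep1} to absorb $\pr$-shifts, into
\begin{align*}
\pi(a)_\l\pi(b)_\mu p + \pi(a_\l b)_{\l+\mu} p + \pi(b)_\mu \pi(a)_\l p = 0,
\end{align*}
which is precisely \eqref{rep2}; by conformal commutativity in $A$ the analogous statements with the roles of the module element shifted to the $m$-slot or the $n$-slot reduce to the same relation \eqref{rep2} after applying the skew-substitution identities \eqref{lemma} and Lemma \ref{Lemme one} to rewrite expressions like $\pi(c)_{-\l-\mu-\pr}(\cdots)$. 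The main obstacle is precisely this bookkeeping: correctly tracking the $-\l-\pr$ substitutions in the second slot of \eqref{semi direct product} and matching them against \eqref{rep1}, \eqref{rep2}, and the associativity-type identities of Lemma \ref{Lemme one}, so that each of the three "one element from $M$" groupings collapses to \eqref{rep2}. Since \eqref{semi direct product} produces no term with two or three arguments from $M$ (the product of two elements of $M$ is zero and $\pi$ only ever acts with an $A$-argument), those higher groupings vanish identically.

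Finally, $M$ is a submodule closed under $\pr$, and $(a+0)_\l(0+n) = \pi(a)_\l n \in M$ while $(0+m)_\l(0+n) = 0$; hence $M$ is an ideal with zero multiplication, i.e.\ an abelian ideal of $A \ltimes M$. This completes the proof.
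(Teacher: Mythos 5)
Your proposal is correct and follows essentially the same route as the paper: check sesquilinearity and commutativity componentwise, expand the conformal Jacobi identity, observe the pure $A$-part is \eqref{conf Jacobi identity}, and group the remaining terms by which argument lies in $M$ (the paper's $X$, $Y$, $Z$), each group vanishing by \eqref{rep2} together with Lemma \ref{Lemme one}. The abelian-ideal observation at the end matches the paper as well, so no further comment is needed.
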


\begin{proof}
For any $a,b,c  \in A$ and $m,n,p  \in M$, we have
\begin{align*}
    \pr (a+ m )  _\l  (b+ n )=&
 \pr a_\l  b + \pi(\pr a)_\l  n + \pi(b)_{-\l-\pr} \pr m \\
=&-\l (  a_\l b  +   \pi(a)_\l n +  \pi(b)_{-\l-\pr} m )
=-\l   (  a +  m )  _\l  (b + n ).
\end{align*}
Using a similar computation, one can easily check that
\begin{align*}
&   (a+ m  ) _\l  \pr (b+ n)  =(\l+\pr)     (a +  m ) _\l (b +n).
\end{align*}
Moreover,
\begin{align*}
&(  b +  n   )_{-\l-\pr}  ( a + m )=
(   b_{-\l-\pr} a +  \pi(b)_{-\l-\pr}m+\pi(a)_\l m   )
= (   a +  m   )_\l  ( b + n ).
\end{align*}
In addition,
\begin{align*}
   ( a +m )_\l  \Big( (b + n) _\mu (  c + p ) \Big) 
=& ( a +m) _\l  ( b_\mu c)  +  \pi(b)_\mu p + \pi(c)_{-\mu-\pr} n  \\
=& (    a_\l(b_\mu c) +   \pi(a)_\l (\pi(b)_\mu p)+ \pi(a)_\l ( \pi(c)_{-\mu-\pr} n) +\pi(b_\mu c)_{-\l-\pr}m   ),
\end{align*}

\begin{align*}
   &\Big( (a +m) _\l  ( b + n )\Big)_{\lambda+\mu}  (  c + p ) \\
=&   ( a_\l b  +  \pi(a)_\l n + \pi(b)_{-\l-\pr} m ) _{\lambda+\mu}( c +p )\\
=& \big(    (a_\l b)_{\l+\mu} c +   \pi(a_\l b)_{\l+\mu} p+ \pi(c)_{-\l-\mu-\pr} (\pi(a)_\l n) + \pi(c)_{-\l-\mu-\pr}(\pi(b)_{-\l-\pr} m)   \big)
\end{align*}
and 

\begin{align*}
   ( b +n )_\mu  \Big(( a + m )_\l (  c + p ) \Big) 
=& (    b_\mu(a_\l c) +   \pi(b)_\mu (\pi(a)_\l p)+ \pi(b)_\mu  (\pi(c)_{-\l-\pr} m) +\pi(a_\l c)_{-\mu-\pr}n  ). 
\end{align*}
Then 
\begin{align*}
&  ( a +m )_\l  \Big(( b + n )_\mu (  c + p ) \Big)
+    \Big(( a +m )_\l  ( b + n )\Big)_{\lambda+\mu}  (  c + p ) 
+ ( b +n )_\mu  \Big(( a + m )_\l (  c + p ) \Big)  \\
=& ( a_\l(b_\mu c)+(a_\l b)_{\l+\mu}c+b_\mu(a_\l c) + X+ Y+Z ) = (0+ X+Y+Z ),
\end{align*}
where
\begin{align*}
&X=\pi(a)_\l(\pi(b)_\mu p)+\pi(a_\l b)_{\l+\mu} p+ \pi(b)_\mu(\pi(a)_\l p ),\\
&Y=\pi(a)_\l ( \pi(c)_{-\mu-\pr} n)+\pi(c)_{-\l-\mu-\pr} (\pi(a)_\l n)+\pi(a_\l c)_{-\mu-\pr}n,\\
&Z=\pi(b_\mu c)_{-\l-\pr}m + \pi(c)_{-\l-\mu-\pr}(\pi(b)_{-\l-\pr} m)+\pi(b)_\mu  (\pi(c)_{-\l-\pr} m).
\end{align*}
According to Lemma \ref{Lemme one} and identity \eqref{rep2}, one can check that $X=Y=Z=0$.
\end{proof}

\begin{pro}
Let $(M,\pi)$ be a finite dimensional representation of a Jacobi-Jordan conformal algebra $(A,\cdot_\l)$. Let $\pi^*$ be an $\mathbb{F}[\pr]$-module homomorphism from $A$ to $Cend(M^{*c})$ defined by
\begin{align}
 ( \pi^*(a)_\l f)_\mu m= f_{\mu-\l}(\pi(a)_\l m), \ \forall a \in A, f \in M^{*c}, m \in M.
\end{align}
Then $(M^{*c},\pi^*)$  is a representation of $A$. 
\end{pro}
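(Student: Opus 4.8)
The plan is to verify the two defining axioms of a representation, namely \eqref{rep1} and \eqref{rep2}, for the dual action $\pi^*$. Since $\pi^*$ is already assumed to be an $\mathbb{F}[\pr]$-module homomorphism (and its well-definedness as a map into $Cend(M^{*c})$ is a matter of checking the conformal-linearity sesquilinearity relation $\pi^*(a)_\l \pr = (\l+\pr)\pi^*(a)_\l$, which one confirms by pairing against an arbitrary $m\in M$ and using the relation $f_\mu(\pr m)=\mu f_\mu(m)$ together with $\pi(a)_\l \pr = (\l+\pr)\pi(a)_\l$), the first axiom $\pi^*(\pr a)_\l = -\l\,\pi^*(a)_\l$ follows immediately by evaluating both sides on $f\in M^{*c}$ and $m\in M$: the right-hand side gives $-\l f_{\mu-\l}(\pi(a)_\l m)$, while the left gives $f_{\mu-\l}(\pi(\pr a)_\l m) = f_{\mu-\l}(-\l\,\pi(a)_\l m)$ by \eqref{rep1} for $\pi$, and these agree by $\mathbb{F}[\l]$-linearity of $f$.

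The substantive step is \eqref{rep2}: one must show
\begin{align*}
\pi^*(a_\l b)_{\l+\mu} = -\pi^*(a)_\l \pi^*(b)_\mu - \pi^*(b)_\mu \pi^*(a)_\l .
\end{align*}
The strategy is to pair both sides with $f\in M^{*c}$ and $m\in M$ and reduce everything to an identity inside $M$ to which \eqref{rep2} for $\pi$ applies. For the left side, $\big(\pi^*(a_\l b)_{\l+\mu} f\big)_\nu m = f_{\nu-\l-\mu}\big(\pi(a_\l b)_{\l+\mu} m\big)$, and by \eqref{rep2} for $\pi$ this equals $-f_{\nu-\l-\mu}\big(\pi(a)_\l\pi(b)_\mu m + \pi(b)_\mu\pi(a)_\l m\big)$. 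For the right side one unwinds the composition of conformal linear maps: using $(f_\l g)_{\l+\mu} = f_\l g_\mu$ for composition and the definition of $\pi^*$, one computes $\big((\pi^*(a)_\l\pi^*(b)_\mu) f\big)_\nu m$ by first applying $\pi^*(b)_\mu$ then $\pi^*(a)_\l$, carefully tracking the spectral-parameter shifts, and similarly for the other term; the goal is to see that the sum of these two matches $f_{\nu-\l-\mu}\big(\pi(a)_\l\pi(b)_\mu m + \pi(b)_\mu\pi(a)_\l m\big)$.

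The main obstacle — and the only place real care is needed — is the bookkeeping of the formal parameters when composing the conformal linear maps $\pi^*(a)_\l$ and $\pi^*(b)_\mu$ acting on $M^{*c}$, since $Cend(M^{*c})$ has its own $\pr$-action ($\pr f_\l = -\l f_\l$) and the defining formula for $\pi^*$ involves the shifted argument $f_{\mu-\l}$. One should apply Lemma \ref{Lemme one} (parts (2) and (3), which govern exactly such shifted compositions $f_{-\l-\pr}(g_\mu m)$ and related expressions) to move the $\pr$'s past the action maps correctly; the identities \eqref{lemma} may also be invoked to normalize the parameters. Once the parameter shifts are aligned, the identity collapses to \eqref{rep2} for $\pi$ applied inside $M$, and the $\mathbb{F}[\l]$-linearity of the functionals $f$ finishes the argument. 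I would organize the computation as two displayed chains (one per side), each ending in the common expression $f_{\nu-\l-\mu}\big(\pi(a)_\l\pi(b)_\mu m + \pi(b)_\mu\pi(a)_\l m\big)$, and then conclude.
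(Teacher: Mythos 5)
Your proposal is correct and follows essentially the same route as the paper: pair both sides against $f\in M^{*c}$ and $m\in M$, apply the defining formula for $\pi^*$ twice, and reduce to \eqref{rep2} for $\pi$ inside $M$ (this direct double application makes the appeal to Lemma \ref{Lemme one} unnecessary). The only point worth noting is that the composition $\pi^*(a)_\l\pi^*(b)_\mu$ paired with $m$ yields $f_{\nu-\l-\mu}\big(\pi(b)_\mu\pi(a)_\l m\big)$ with the factors in reversed order, which is harmless since the two cross terms enter symmetrically.
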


\begin{proof}
Let $a,b \in A$, $m \in M$ and $f \in M^{*c}$. Then we have
\begin{align*}
 \big(\pi^*(\pr a)_\l f\big)_\mu m= f_{\mu-\l}\big(\pi(\pr a)_\l m\big)= -\l f_{\mu-\l} \big(\pi(a)_\l m\big)=-\l  \big(\pi^*( a)_\l f\big)_\mu m.    
\end{align*}
Then \eqref{rep1} holds. To prove \eqref{rep2}, we compute as follows
\begin{align*}
\big(\pi^*(a_\l b)_{\l+\mu} f \big)_\nu m=&f_{\nu-\l-\mu}\big(\pi(a_\l b)_{\l+\mu}m \big) \\
=& -f_{\nu-\l-\mu}\big(\pi(a)_\l \pi(b)_\mu m \big)  -f_{\nu-\l-\mu}\big(\pi(b)_\mu \pi(a)_\l m \big).
\end{align*}
On the other hand, we have
\begin{align*}
  &  \pi^*(a)_\l\big(\pi^*(b)_\mu f \big)_\nu m= \big(\pi^*(b)_\mu f \big)_{\nu-\l} (\pi(a)_\l m) =f_{\nu-\l-\mu}\big( \pi(b)_\mu \pi(a)_\l m \big),\\
  &  \pi^*(b)_\mu\big(\pi^*(a)_\l f \big)_\nu m= \big(\pi^*(a)_\l f \big)_{\nu-\mu} (\pi(b)_\mu m) =f_{\nu-\l-\mu}\big( \pi(a)_\l \pi(b)_\mu m \big). 
\end{align*}
Therefore
\begin{align*}
\big(\pi^*(a_\l b)_{\l+\mu} f \big)_\nu m=- \pi^*(a)_\l\big(\pi^*(b)_\mu f \big)_\nu m- \pi^*(b)_\mu\big(\pi^*(a)_\l f \big)_\nu m,   
\end{align*}
which implies that  $(M^{*c},\pi^*)$  is a representation of $A$.
\end{proof}

\subsection{$\mathcal{O}$-operators on Jacobi-Jordan conformal algebras}

\begin{defi}
Let $(A,\c_\l)$ be a Jacobi-Jordan conformal algebra and $\pi: A \to Cend(M)$ be a representation. An $\mathbb{F}[\pr]$-module homomorphism $T: M \to A$ 
satisfying
\begin{align}\label{O-Operator}
& Tu_\l Tv= T\big(\pi(Tu)_\l v+ \pi(Tv)_{-\l-\pr} u \big ),\ \ \forall u,v\in M,  
\end{align}
 is called an   $\mathcal{O}$-operator.
\end{defi}

If $M=A$ and $\pi$ is the adjoint representation, then $T$ is  called a Rota-Baxter operator  of weight $0$ on $A$ denoted by $R$ and the condition \eqref{O-Operator} can be rewritten as
\begin{align}\label{RotaBaxter}
& R(a)_\l R(b)= R\big(R(a)_\l b+ a_\l R(b) \big ),\ \ \forall a,b\in A.   
\end{align}
More generally, we can extend the notion of weighted Rota-Baxter operators of Jacobi-Jordan algebras to  conformal case by considering the following weighted Rota-Baxter identity:
\begin{align}\label{WeightedRotaBaxter}
& R(a)_\l R(b)= R\big(R(a)_\l b+ a_\l R(b)\big)  +\alpha R(a_\l b),\ \ \forall a,b\in A.   
\end{align}

\begin{thm}\label{ALS conf on module}
Let $A$ be a Jacobi-Jordan conformal  algebra and $\pi : A \to  Cend(M)$  be a representation of $A$. 
Suppose $T : M \to A$ is an  $\mathcal{O}$-operator associated with $\pi$. Then, the
following $\l$-product:
\begin{align}
u \ast_\l  v = \pi(T u)_\l v,\ \ \forall u, v \in  M, 
\end{align}
endows $M$ with a  left anti-symmetric conformal algebra structure. 
Therefore, $M$ is a Jacobi-Jordan  conformal algebra which is the sub-adjacent Jacobi-Jordan  conformal algebra of this left anti-symmetric
conformal algebra, and $T : M  \to A$ is a homomorphism of Jacobi-Jordan  conformal algebras.
Moreover, $\{T (M) \subset  A \}$ is a Jacobi-Jordan  conformal subalgebra of $A$ and there is also a natural
left anti-symmetric conformal algebra structure on $T (M)$ defined as follows:
$$T (u)_\l T (v) = T (u \ast_\l  v) = T (\pi(T u)_\l v),\ \ \forall u, v \in  M.$$
In addition, the sub-adjacent Jacobi-Jordan  conformal algebra of this left anti-symmetric conformal
algebra is a subalgebra of $A$ and $T : M \to  A$ is a homomorphism of left anti-symmetric
conformal algebras.

\end{thm}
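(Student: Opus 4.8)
The plan is to verify the assertions of the theorem in the order stated: first that $(M,\ast_\l)$ is a conformal algebra, then that it satisfies the left anti-symmetric axiom \eqref{left anti symm}, and finally to transport the picture along $T$ onto its image. Throughout I use that $T$ is $\mathbb{F}[\pr]$-linear and that $\pi(Ta)\in Cend(M)$ for each $a\in M$.

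\emph{Step 1 (conformal algebra structure).} By $\mathbb{F}[\pr]$-linearity of $T$ and \eqref{rep1}, $(\pr u)\ast_\l v=\pi\big(\pr(Tu)\big)_\l v=-\l(u\ast_\l v)$; and since $\pi(Tu)$ is a conformal linear map, $u\ast_\l(\pr v)=\pi(Tu)_\l(\pr v)=(\l+\pr)(u\ast_\l v)$. Hence $\ast_\l$ is conformal sesquilinear, so $(M,\ast_\l)$ is a conformal algebra and in particular identity \eqref{lemma} holds for it.

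\emph{Step 2 (left anti-symmetry).} Expand \eqref{left anti symm} for $\ast_\l$ with arguments $u,v,w$: the second and fourth terms are $\pi(Tu)_\l\big(\pi(Tv)_\mu w\big)$ and $\pi(Tv)_\mu\big(\pi(Tu)_\l w\big)$. For the first and third terms, identity \eqref{lemma} applied to the conformal algebra $(M,\ast_\l)$ rewrites the third as $(v\ast_\mu u)\ast_{\l+\mu}w=(v\ast_{-\l-\pr}u)\ast_{\l+\mu}w=\pi\big(T(\pi(Tv)_{-\l-\pr}u)\big)_{\l+\mu}w$, so that
\[
(u\ast_\l v)\ast_{\l+\mu}w+(v\ast_\mu u)\ast_{\l+\mu}w=\pi\Big(T\big(\pi(Tu)_\l v+\pi(Tv)_{-\l-\pr}u\big)\Big)_{\l+\mu}w=\pi\big(Tu_\l Tv\big)_{\l+\mu}w,
\]
the last equality by the $\mathcal O$-operator relation \eqref{O-Operator}; and by \eqref{rep2} this equals $-\pi(Tu)_\l\big(\pi(Tv)_\mu w\big)-\pi(Tv)_\mu\big(\pi(Tu)_\l w\big)$. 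The four terms cancel, so $(M,\ast_\l)$ is left anti-symmetric. By the Proposition stating that every left anti-symmetric conformal algebra is Jacobi-Jordan admissible, the sub-adjacent product $u\ci_\l v=\pi(Tu)_\l v+\pi(Tv)_{-\l-\pr}u$ makes $M$ a Jacobi-Jordan conformal algebra, and $T(u\ci_\l v)=T\big(\pi(Tu)_\l v+\pi(Tv)_{-\l-\pr}u\big)=Tu_\l Tv$ by \eqref{O-Operator}; since $T$ is $\mathbb{F}[\pr]$-linear, $T$ is then a homomorphism of Jacobi-Jordan conformal algebras.

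\emph{Step 3 (structure on the image).} Since $Tu_\l Tv=T(u\ci_\l v)\in T(M)[\l]$, the image $T(M)$ is closed under the $\l$-product of $A$, hence is a Jacobi-Jordan conformal subalgebra. To equip $T(M)$ with $T(u)_\l T(v):=T(u\ast_\l v)=T(\pi(Tu)_\l v)$ one checks that $\ker T$ is a two-sided conformal ideal of $(M,\ast_\l)$: if $Tu=0$ then $u\ast_\l v=\pi(0)_\l v=0$, while \eqref{O-Operator} applied to the pair $(v,u)$ gives $0=Tv_\l Tu=T(\pi(Tv)_\l u)$, i.e. $v\ast_\l u\in\ker T$. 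Thus $\ast_\l$ descends along the $\mathbb{F}[\pr]$-module isomorphism $M/\ker T\xrightarrow{\ \sim\ }T(M)$; the induced left anti-symmetric conformal product on $T(M)$ makes $T$ a homomorphism by construction, and its sub-adjacent Jacobi-Jordan product equals $T(u)_\l T(v)+T(v)_{-\l-\pr}T(u)=T(u\ci_\l v)=Tu_\l Tv$, i.e. exactly the restriction of the $\l$-product of $A$; hence this sub-adjacent algebra is a subalgebra of $A$.

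\emph{Main obstacle.} Every computation is a direct substitution of \eqref{rep1}, \eqref{rep2} and \eqref{O-Operator}; the one point needing care is the replacement of $\pi(Tv)_\mu u$ by $\pi(Tv)_{-\l-\pr}u$ inside $\pi\big(T(\,\cdot\,)\big)_{\l+\mu}w$ in Step 2, which is what turns the first-plus-third terms into the right-hand side of \eqref{O-Operator}; it is dispatched by invoking identity \eqref{lemma} for $(M,\ast_\l)$. The well-definedness of the product on the image in Step 3 is handled by the ideal argument.
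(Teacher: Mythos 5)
Your proof is correct and its core computation is essentially the paper's own argument: both establish sesquilinearity, use the shift $(v\ast_\mu u)\ast_{\l+\mu}w=(v\ast_{-\l-\pr}u)\ast_{\l+\mu}w$, feed in the $\mathcal O$-operator identity \eqref{O-Operator} and then \eqref{rep2}; you merely group the first and third terms before invoking \eqref{O-Operator}, whereas the paper substitutes it into the first term and cancels. You additionally spell out the remaining assertions (the sub-adjacent Jacobi-Jordan structure, $T$ being a homomorphism, $T(M)$ being a subalgebra, and well-definedness of the induced product on $T(M)$ via the ideal $\ker T$), which the paper's written proof omits after checking left anti-symmetry, so this extra material is a welcome completion rather than a deviation.
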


\begin{proof}
For any $u,v,w \in M$, we have
\begin{align*}
 &\pr u \ast_\l v=\pi(T(\pr u))_\l v=\pi(\pr Tu)_\l v=-\l \pi(Tu)_\l v=-\l u \ast_\l v,\\ 
 & u \ast_\l \pr v=\pi(Tu)_\l (\pr v)=(\pr+\l )\pi(Tu)_\l v=(\pr+\l )u \ast_\l v. 
\end{align*}
Furthermore we have 
\begin{align*}
&(u \ast_\l v)\ast_{\l+\mu}w+ u\ast _\l (v\ast_\mu w) \\ 
=&(\pi(Tu)_\l v)\ast_{\l+\mu}w +u\ast_\l(\pi(Tv)_\mu w) \\
=& \pi(T(\pi(Tu)_\l v))_{\l+\mu}w+ \pi(Tu)_\l \pi(Tv)_\mu w\\
=& \pi(T(u)_\l T(v))_{\l+\mu}w-\pi(T(\pi(Tv)_{-\l-\pr}u)_{\l+\mu} w -\pi(T(u)_\l bT(v))_{\l+\mu}w-\pi(Tv)_\mu \pi(Tu)_\l w \\
=&-\pi(T(\pi(Tv)_{-\l-\pr}u)_{\l+\mu} w -\pi(Tv)_\mu \pi(Tu)_\l w \\
=&-\pi(T(\pi(Tv)_\mu u)_{\l+\mu} w -\pi(Tv)_\mu \pi(Tu)_\l w \\
=& -(v \ast_\mu u)\ast_{\l+\mu}-v \ast_\mu (u \ast_\l w).
\end{align*}
Hence $(M, \ast_\l)$ is a left ant-symmetric conformal algebra.
\end{proof}

\begin{cor}
Let $A$ be a Jacobi-Jordan  conformal algebra and $T : A \to A$  be a Rota–Baxter
operator of weight zero. Then, there is a left anti-symmetric conformal algebra structure
on $A$  with the following $\l$-product:
$$ a \circ_\l b= T(a)_\l b,\quad a,b \in A.$$
\end{cor}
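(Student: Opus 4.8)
The plan is to obtain the corollary as a direct specialization of Theorem \ref{ALS conf on module}. Recall that a Rota--Baxter operator $T : A \to A$ of weight zero is precisely an $\mathcal{O}$-operator associated with the adjoint representation $\pi = \mathrm{ad}$, where $\pi(a)_\l b = a_\l b$. Indeed, with $M = A$ and this choice of $\pi$, the defining identity \eqref{O-Operator} of an $\mathcal{O}$-operator becomes
\begin{align*}
T(a)_\l T(b) = T\big(\pi(T a)_\l b + \pi(T b)_{-\l-\pr} a\big) = T\big(T(a)_\l b + T(b)_{-\l-\pr} a\big) = T\big(T(a)_\l b + a_\l T(b)\big),
\end{align*}
where in the last step we used the commutativity axiom \eqref{commutativity}, so this is exactly \eqref{RotaBaxter}.

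With this identification in hand, I would simply invoke Theorem \ref{ALS conf on module}: the $\l$-product $u \ast_\l v = \pi(Tu)_\l v$ endows $M$ with a left anti-symmetric conformal algebra structure. Substituting $M = A$ and $\pi = \mathrm{ad}$ gives $a \ast_\l b = \pi(Ta)_\l b = T(a)_\l b$, which is precisely the $\l$-product $a \circ_\l b = T(a)_\l b$ stated in the corollary. Hence $(A, \circ_\l)$ is a left anti-symmetric conformal algebra, and there is nothing further to prove.

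There is essentially no obstacle here; the only points requiring a word of care are (i) noting that the sesquilinearity axioms for $\circ_\l$ are inherited automatically since they were already verified for $\ast_\l$ in the proof of Theorem \ref{ALS conf on module} (the computation $\pr a \circ_\l b = -\l\, a \circ_\l b$ and $a \circ_\l \pr b = (\pr + \l)\, a \circ_\l b$ follows from \eqref{rep1} and the $\mathbb{F}[\pr]$-linearity of $T$), and (ii) making explicit the reduction of \eqref{O-Operator} to \eqref{RotaBaxter} via commutativity, as displayed above. Thus the proof consists of one line identifying $R$ as an $\mathcal{O}$-operator for the adjoint representation, followed by an appeal to the theorem.
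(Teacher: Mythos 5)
Your proposal is correct and coincides with the paper's (implicit) argument: the paper itself notes that a weight-zero Rota--Baxter operator is exactly an $\mathcal{O}$-operator for the adjoint representation, so the corollary is the direct specialization of Theorem \ref{ALS conf on module} with $M=A$, $\pi=\mathrm{ad}$, just as you do. Your extra remark reducing \eqref{O-Operator} to \eqref{RotaBaxter} via the commutativity axiom \eqref{commutativity} is a correct and welcome clarification, not a deviation.
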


\begin{cor}
Let $(A,\cdot_\l)$ be a Jacobi-Jordan  conformal algebra. There is a compatible left anti-symmetric  conformal algebra structure on $A$ if and only if there exists a bijective $\mathcal{O}$-operator
$T : M \to A$ associated with some representation  $(M,\pi)$ of $A$. 
\end{cor}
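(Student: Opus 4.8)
The plan is to prove both implications separately, using Theorem \ref{ALS conf on module} for the nontrivial direction. First I would establish the easy direction: suppose $(A,\star_\l)$ is a compatible left anti-symmetric conformal algebra structure on $A$, meaning $a_\l b = a\star_\l b + b\star_{-\l-\pr} a$ for all $a,b\in A$ (this is what ``compatible'' should mean — the sub-adjacent Jacobi-Jordan conformal algebra of $\star_\l$ coincides with $(A,\cdot_\l)$). Then I claim the identity map $\mathrm{id}: A \to A$ is a bijective $\mathcal{O}$-operator associated with the representation $\pi$ defined by $\pi(a)_\l b = a\star_\l b$. One must check two things: that $(A,\pi)$ is genuinely a representation of $(A,\cdot_\l)$ in the sense of \eqref{rep1}--\eqref{rep2}, and that \eqref{O-Operator} holds for $T=\mathrm{id}$. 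The representation axioms follow by expanding \eqref{rep2}: $\pi(a_\l b)_{\l+\mu} = \pi(a\star_\l b)_{\l+\mu} + \pi(b\star_{-\l-\pr}a)_{\l+\mu}$ acting on $c$ gives $(a\star_\l b)\star_{\l+\mu}c + (b\star_{-\l-\pr}a)\star_{\l+\mu}c$, which by \eqref{left anti symm} equals $-a\star_\l(b\star_\mu c) - b\star_\mu(a\star_\l c) = -\pi(a)_\l\pi(b)_\mu c - \pi(b)_\mu\pi(a)_\l c$; here one uses identity \eqref{lemma} to rewrite $(b\star_{-\l-\pr}a)\star_{\l+\mu}c$ with the correct indices so that \eqref{left anti symm} applies with parameters $\l,\mu$. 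Sesquilinearity \eqref{rep1} is immediate from $\pr a\star_\l b = -\l\, a\star_\l b$. The $\mathcal{O}$-operator condition for $T=\mathrm{id}$ reads $a_\l b = \pi(a)_\l b + \pi(b)_{-\l-\pr}a = a\star_\l b + b\star_{-\l-\pr}a$, which is exactly compatibility. Thus $\mathrm{id}$ is a bijective $\mathcal{O}$-operator.

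For the converse, suppose $T: M \to A$ is a bijective $\mathcal{O}$-operator associated with a representation $(M,\pi)$. By Theorem \ref{ALS conf on module}, $M$ carries a left anti-symmetric conformal algebra structure $u\ast_\l v = \pi(Tu)_\l v$, and one transports this along the $\mathbb{F}[\pr]$-module isomorphism $T$: define $a\star_\l b := T(T^{-1}a \ast_\l T^{-1}b) = T(\pi(a)_\l T^{-1}b)$ for $a,b\in A$. Since $T$ is an $\mathbb{F}[\pr]$-module homomorphism and $\ast_\l$ satisfies conformal sesquilinearity, so does $\star_\l$; and the left anti-symmetry identity \eqref{left anti symm} for $\star_\l$ follows from that for $\ast_\l$ by applying $T$ and using $T^{-1}(a\star_\l b) = T^{-1}a\ast_\l T^{-1}b$. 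It remains to check that $\star_\l$ is \emph{compatible} with $\cdot_\l$, i.e.\ $a\star_\l b + b\star_{-\l-\pr}a = a_\l b$. Writing $a = Tu$, $b = Tv$, the left side is $T(\pi(Tu)_\l v) + T(\pi(Tv)_{-\l-\pr}u) = T\big(\pi(Tu)_\l v + \pi(Tv)_{-\l-\pr}u\big)$, which by \eqref{O-Operator} equals $Tu_\l Tv = a_\l b$. So $\star_\l$ is the desired compatible structure.

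The main obstacle I anticipate is purely bookkeeping rather than conceptual: tracking the formal parameters and the $-\l-\pr$ substitutions correctly when invoking \eqref{left anti symm} and identity \eqref{lemma} in the verification that $(A,\pi)$ is a representation. In particular, the term $(b\star_{-\l-\pr}a)\star_{\l+\mu}c$ must be massaged into the shape $(b\star_\mu a)\star_{\l+\mu}c$ appearing in \eqref{left anti symm}, which is precisely what the first identity in \eqref{lemma} provides (with the roles of the parameters matched appropriately); an analogous care is needed for the sesquilinearity-driven rewriting $\pi(b)_\mu(\pi(a)_\l c)$ versus the raw output of $\ast$. Once those substitutions are handled, everything else is a direct consequence of Theorem \ref{ALS conf on module} and the definitions, so I would present the argument compactly, citing the theorem for the transport of structure and doing the compatibility check explicitly since it is short and is the crux of the equivalence.
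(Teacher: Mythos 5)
Your proposal is correct and follows essentially the same route as the paper: transport of the left anti-symmetric structure from $M$ to $A$ via $a\star_\l b=T(\pi(a)_\l T^{-1}b)$ in one direction, and taking $T=\mathrm{id}$ as a bijective $\mathcal{O}$-operator in the other. The only difference is that you spell out what the paper leaves implicit (it loosely says ``the adjoint representation''): the relevant representation in the converse is left multiplication by the compatible left anti-symmetric product, $\pi(a)_\l b=a\star_\l b$, and your verification of \eqref{rep1}--\eqref{rep2} via \eqref{left anti symm} and the sesquilinearity identity \eqref{lemma}, together with the explicit compatibility check from \eqref{O-Operator}, is exactly the ``straightforward checking'' the paper invokes.
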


\begin{proof}
Suppose that there exists a bijective $\mathcal{O}$-operator $T: M \to A$  of $(A,\cdot_\l)$ associated
with a representation $(M,\pi)$. 
Then by Theorem \ref{ALS conf on module},  with a straightforward checking, we have that
$$a \ast_\l b=T(\pi(a)_\l T^{-1}(b)),\ \ \forall a,b\in A,$$ defines a compatible left anti-symmetric conformal algebra structure on $A$. 

Conversely, suppose that there is a compatible left anti-symmetric  conformal algebra structure 
on $A$. Then the identity map $Id: A \to A$ is  a bijective $\mathcal{O}$-operator of $A$
associated with  the adjoint representation. 

\end{proof}

\subsection{Symplectic Jacobi-Jordan  conformal algebras}
Next, we introduce a definition of symplectic Jacobi-Jordan  conformal algebras and show a relationship with left anti-symmetric conformal algebras. 
First, let us recall  some notations about conformal bilinear forms. 

Let $V$ be an   $ \mathbb{F}[\l]$-module.   A conformal bilinear form on $V$ is an
 $\mathbb{F}$-linear map $\phi_\l: V \otimes V \to \mathbb{F}[\l]$    satisfying the following conditions
\begin{align}
    & \phi_\l (\pr u,v)=-\l \phi_\l (u,v),\quad 
         \phi_\l ( u,\pr v)=\l \phi_\l (u,v).
\end{align} 
 A conformal bilinear form is called  skew-symmetric if
 $\phi_\l(u,v)=-\phi_{-\l}(v,u)$. 
 Suppose that there is a conformal bilinear form on a $\mathbb{F}[\pr]$-module $V$. Then, we
have an $\mathbb{F}[\pr]$-module homomorphism  $T : V \to V^{*c},\  u \mapsto  T_u$  given by
$$ (T_u)_\l  v= \phi_\l (u,v),\ u,v \in V.$$
A  conformal bilinear form is called non-degenerate if $T$ is an isomorphism of $\mathbb{F}[\pr]$-modules between $V$ and $V^{*c}$.

\begin{defi}
A Jacobi-Jordan  conformal algebra $A$ is called a symplectic Jacobi-Jordan conformal
algebra if there is a non-degenerate skew-symmetric conformal bilinear form $\phi_\l$ on $A$
satisfying
\begin{align} 
& \phi_\l (a_,b_\mu c)  +\phi_\mu(b,a_\l c)+
\phi_{\l+\mu} (a_\l b,c)=0  .     
\end{align}
\end{defi}

\begin{ex}
Let $(A,\phi)$ be  a symplectic Jacobi-Jordan algebra. Then, $(Cur A, \phi_\l)$ is a
symplectic Jacobi-Jordan conformal algebra where $\phi_\l$
is defined by
$$ \phi_\l(f(\pr) a,g(\pr) b)=f(-\l)g(\l)\phi(a,b),\ \forall f(\pr), g(\pr) \in \mathbb{F}[\pr], a,b \in A.$$
\end{ex}

The following result relates left anti-symmetric conformal algebras with symplectic Jacobi-Jordan conformal algebras.

\begin{pro}
Let $(A,\cdot_\l, \phi_\l)$ be a symplectic Jacobi-Jordan conformal algebra.  Then there exists a compatible left anti-symmetric conformal  algebra structure $\circ_\l$ on $A$ given by
\begin{align}
    & \phi_\mu (a \circ_\l b,c)=\phi_{\mu-\l}(b,a_\l c),\ \forall a,b,c \in A.
\end{align}
\end{pro}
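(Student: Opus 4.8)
The plan is to recognise the adjoint map of the form $\phi_\l$ as the inverse of an $\mathcal{O}$-operator and then to invoke the characterisation of compatible left anti-symmetric structures established above. By non-degeneracy of $\phi_\l$ we have the $\mathbb{F}[\pr]$-module isomorphism $T\colon A\to A^{*c}$, $a\mapsto T_a$, with $(T_a)_\l b=\phi_\l(a,b)$; put $S:=T^{-1}\colon A^{*c}\to A$. First I would observe that $\mathrm{ad}\colon A\to Cend(A)$, $\mathrm{ad}(a)_\l b=a_\l b$, is a representation of $A$: condition \eqref{rep1} is conformal sesquilinearity and \eqref{rep2} is precisely the Jacobi identity \eqref{conf Jacobi identity}. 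Hence, by the Proposition on dual representations proved above, $\mathrm{ad}^{*}$ is a representation of $A$ on $A^{*c}$ (we take $A$ finite, so that $Cend(M)$ and $A^{*c}$ are well-behaved). It then suffices to show that $S$ is a bijective $\mathcal{O}$-operator associated with $(A^{*c},\mathrm{ad}^{*})$; bijectivity is clear, and the Corollary characterising compatible left anti-symmetric conformal algebra structures will produce the desired structure.

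The main step is the $\mathcal{O}$-operator identity
$$S(f)_\l S(g)=S\big(\mathrm{ad}^{*}(Sf)_\l g+\mathrm{ad}^{*}(Sg)_{-\l-\pr}f\big),\qquad f,g\in A^{*c}.$$
Writing $a=Sf$, $b=Sg$ (so $f=T_a$, $g=T_b$) and applying $T$ to both sides — equivalently, pairing against an arbitrary $c\in A$ in an auxiliary variable $\nu$ — the left-hand side becomes $\phi_\nu(a_\l b,c)$, while, using the defining rule $(\mathrm{ad}^{*}(x)_\sigma h)_\nu m=h_{\nu-\sigma}(x_\sigma m)$ of the dual representation, the first term on the right becomes $\phi_{\nu-\l}(b,a_\l c)$ and the second becomes $\phi_\l(a,b_{\nu-\l}c)$ once the shift $-\l-\pr$ is absorbed with the help of the sesquilinearity identities \eqref{lemma} and Lemma \ref{Lemme one}. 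Thus the $\mathcal{O}$-operator identity reduces to a relation among $\phi_\nu(a_\l b,c)$, $\phi_{\nu-\l}(b,a_\l c)$ and $\phi_\l(a,b_{\nu-\l}c)$ which, after the substitution $\mu\mapsto\nu-\l$ and an application of the skew-symmetry $\phi_\l(u,v)=-\phi_{-\l}(v,u)$, is equivalent to the defining symplectic axiom. I expect the only genuinely delicate point to be this bookkeeping of the $\mathbb{F}[\pr]$-shifts in the $-\l-\pr$ arguments and the attendant sign conventions; the rest is formal.

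Granting that $S$ is a bijective $\mathcal{O}$-operator, the Corollary (through Theorem \ref{ALS conf on module}) yields a compatible left anti-symmetric conformal algebra structure on $A$, which by the explicit formula in that proof is $a\circ_\l b=S\big(\mathrm{ad}^{*}(a)_\l T_b\big)=T^{-1}\big(\mathrm{ad}^{*}(a)_\l T_b\big)$. It then remains only to identify this with the asserted formula, which is immediate: for every $c\in A$,
$$\phi_\mu(a\circ_\l b,c)=(T_{a\circ_\l b})_\mu c=\big(\mathrm{ad}^{*}(a)_\l T_b\big)_\mu c=(T_b)_{\mu-\l}(a_\l c)=\phi_{\mu-\l}(b,a_\l c).$$
Alternatively one can bypass $\mathcal{O}$-operators altogether and take the displayed relation as the definition of $a\circ_\l b$: first check that $c\mapsto\phi_{\mu-\l}(b,a_\l c)$ is a conformal functional in $\mu$ — this follows from $a_\l\pr c=(\l+\pr)(a_\l c)$ and the sesquilinearity of $\phi_\l$, so that non-degeneracy furnishes a unique $a\circ_\l b\in A[\l]$ — and then verify conformal sesquilinearity of $\circ_\l$, the left anti-symmetry \eqref{left anti symm}, and the compatibility $a_\l b=a\circ_\l b+b\circ_{-\l-\pr}a$ by pairing against an arbitrary element and using, respectively, the sesquilinearity axioms, the symplectic axiom together with skew-symmetry and \eqref{conf Jacobi identity}, and the symplectic axiom again; the left anti-symmetry verification is the laborious one.
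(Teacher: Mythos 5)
Your fallback route (take the displayed relation as the definition and verify everything by hand) is in spirit what the paper intends --- its proof is literally ``Straightforward'' --- but your main route has a genuine gap, and it sits exactly at the step you wave through as ``bookkeeping of the $\mathbb{F}[\pr]$-shifts and the attendant sign conventions''. Carry the pairing out. With $(T_a)_\nu c=\phi_\nu(a,c)$, $a=Sf$, $b=Sg$, the identity \eqref{O-Operator} for $S=T^{-1}$, paired against $c$ in the variable $\nu$, reads
\[
\phi_\nu(a_\l b,c)=\phi_{\nu-\l}(b,a_\l c)+\phi_\l(a,b_{\nu-\l}c),
\]
and your evaluation of the two right-hand terms is correct. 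But the symplectic axiom, specialized at $\mu=\nu-\l$, says that these same three quantities \emph{sum to zero}; so the $\mathcal O$-operator identity would force $2\phi_\nu(a_\l b,c)=0$. The skew-symmetry $\phi_\l(u,v)=-\phi_{-\l}(v,u)$ flips terms coherently and cannot repair a relative sign, so with the paper's stated conventions $S$ is \emph{not} an $\mathcal O$-operator for $\mathrm{ad}^*$ (nor is $-S$), and the Corollary you want to invoke does not apply. The same sign reappears in the compatibility you need at the end: from the displayed relation one computes $\phi_\mu(b\circ_{-\l-\pr}a,c)=\phi_\l(a,b_{\mu-\l}c)$, hence by the symplectic axiom $a\circ_\l b+b\circ_{-\l-\pr}a=-a_\l b$, not $a_\l b$.

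The alternative direct verification you sketch does not rescue this, because the ``laborious'' left anti-symmetry check you postpone is precisely where the obstruction lives: testing on $\mathrm{Cur}\,A$ with a constant symplectic form $\phi$ on a Jacobi--Jordan algebra $A$, the relation reduces to $\phi(a\circ b,c)=\phi(b,a\cdot c)$, and pairing the sum in \eqref{left anti symm} against $d$ yields $-2\,\phi\big(c,(a\cdot b)\cdot d\big)$, which does not vanish unless $A^2\cdot A=0$; moreover the obstruction is insensitive to replacing $\circ$ by $-\circ$. In other words, the printed symplectic axiom and the printed defining formula are mutually inconsistent by a sign: if the symplectic condition is read in its cyclic ($2$-cocycle) form, then $T(a_\l b)=\mathrm{ad}^*(a)_\l T_b+\mathrm{ad}^*(b)_{-\l-\pr}T_a$ is exactly the $\mathcal O$-operator identity, your argument closes essentially verbatim, and the stated formula for $\circ_\l$ together with its compatibility drops out as in your last display. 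So the strategy is salvageable, but as written the pivotal claim that the $\mathcal O$-operator identity ``is equivalent to the defining symplectic axiom'' is false under the conventions you (and the statement) are using, and the proof is not complete without confronting that sign.
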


\begin{proof}
Straightforward.
\end{proof}

\section{Quadratic Jacobi-Jordan conformal algebras and mock-Gel'fand Dorfman bialgebras}\label{Characterization}
\subsection{Mock-Gel'fand Dorfman bialgebras}
In this section, we introduce a new class of algebras which will be useful to characterize quadratic Jacobi-Jordan conformal algebras. This class will be called mock-Gel'fand Dorfman bialgebras. We first provide some relevant definitions and constructions.  
\begin{defi}
An anti-Novikov algebra is a pair $(A,\circ)$, where $A$ is a vector space and $\circ$ is an operation on $A$ satisfying  the following axioms:
\begin{align}
    & a \circ (b \circ c)=-b \circ (a \circ c),\\
    & (a,b,c)^-_\circ + (a,c,b)^-_\circ =0,
\end{align}
for any $a,b,c \in A$ and  where $(a,b,c)^-_\circ=(a\circ b)\circ c+ a \circ (b \circ c)$. 
\end{defi}

\begin{lem}\label{anti-Nov to JJ alg}
Let $(A,\circ)$ be an anti-Novikov algebra. Then $(A,\ast)$ is a Jacobi-Jordan  algebra, where, for any $a,b \in A$, $a \ast b=a \circ b + b \circ a$. 
\end{lem}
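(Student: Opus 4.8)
The plan is to verify directly that the new commutative product $a \ast b = a \circ b + b \circ a$ satisfies the Jacobi-Jordan identity, using only the two defining axioms of an anti-Novikov algebra. Commutativity of $\ast$ is immediate from the definition, so the whole content is the identity
\[
a \ast (b \ast c) + b \ast (c \ast a) + c \ast (a \ast b) = 0 .
\]
First I would expand the left-hand side into the twelve terms obtained by substituting $b \ast c = b\circ c + c \circ b$ and then $a \ast(\,\cdot\,) = a\circ(\,\cdot\,) + (\,\cdot\,)\circ a$, and likewise for the two cyclic rotations. This yields a sum of terms each of which is one of the two associator-type expressions $(x,y,z)^-_\circ = (x\circ y)\circ z + x\circ(y\circ z)$ or a "left-nested" term of the form $x\circ(y\circ z)$.

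The key bookkeeping step is to group these twelve terms cleverly. I expect that six of them assemble into three copies of the second axiom $(x,y,z)^-_\circ + (x,z,y)^-_\circ = 0$ after cyclic relabelling (namely the instances $(a,b,c)^-_\circ+(a,c,b)^-_\circ$, $(b,c,a)^-_\circ+(b,a,c)^-_\circ$, $(c,a,b)^-_\circ+(c,b,a)^-_\circ$), which kills all the genuinely associator-shaped combinations. The remaining terms will be purely left-nested expressions $x\circ(y\circ z)$; these should cancel in pairs by the first axiom $x\circ(y\circ z) = -y\circ(x\circ z)$, i.e. antisymmetry in the first two slots. Concretely, after the associators are removed one is left with something like $a\circ(c\circ b) + c\circ(b\circ a) + b\circ(a\circ c)$ together with the three "swapped" partners, and pairing each with its mate via the first axiom gives zero. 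I would carry out this regrouping explicitly in one displayed \texttt{align*} computation, being careful to keep the cyclic symmetry manifest.

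The main obstacle is purely organizational rather than conceptual: making sure every one of the twelve expanded terms is accounted for exactly once and that the regrouping into "three copies of axiom (2)" plus "left-nested terms cancelling by axiom (1)" is done without sign errors. A safe way to manage this is to first record, as a preliminary observation, that axiom (2) together with the definition of $(x,y,z)^-_\circ$ gives $(x\circ y)\circ z = -x\circ(y\circ z) - x\circ(z\circ y) - (x\circ z)\circ y$ is not needed; rather it is cleaner to note that in the Jacobi-Jordan sum every term $(x\ast y)\ast z$ contributes $(x\circ y)\circ z + (y\circ x)\circ z + z\circ(x\circ y) + z\circ(y\circ x)$, so the three cyclic contributions split naturally into a "right-multiplication block" $\sum_{\mathrm{cyc}}\big((x\circ y)\circ z + (y\circ x)\circ z\big)$ and a "left-multiplication block" $\sum_{\mathrm{cyc}} z\circ(x\circ y + y\circ x)$. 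The first block is exactly $\sum_{\mathrm{cyc}}\big((a,b,c)^-_\circ - a\circ(b\circ c) + (b,a,c)^-_\circ - b\circ(a\circ c)\big)$; applying axiom (2) in the rotated form $(b,a,c)^-_\circ = -(b,c,a)^-_\circ$ and collecting shows this block equals minus the left-multiplication block, whence the total is zero. I would present exactly this two-block decomposition, invoke axiom (2) to cancel the associators cyclically, invoke axiom (1) to handle the leftover left-nested terms, and conclude that $(A,\ast)$ is a Jacobi-Jordan algebra.
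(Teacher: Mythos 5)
Your argument is correct; note that the paper states this lemma without proof, so the only comparison to make is with the definitions themselves. Two remarks on your bookkeeping. First, the count in your initial sketch is off: writing the cyclic sum as $\sum_{\mathrm{cyc}}\,(a\ast b)\ast c$ and expanding produces twelve $\circ$-terms, and these twelve terms are \emph{exactly} the six anti-associators $(a,b,c)^-_\circ$, $(a,c,b)^-_\circ$, $(b,c,a)^-_\circ$, $(b,a,c)^-_\circ$, $(c,a,b)^-_\circ$, $(c,b,a)^-_\circ$, each expanded term occurring once; pairing them as $(x,y,z)^-_\circ+(x,z,y)^-_\circ$ annihilates the whole sum by three applications of the second axiom alone, so the first axiom $a\circ(b\circ c)=-b\circ(a\circ c)$ is never needed (just as the Jacobi identity for the commutator of a left-symmetric algebra uses only the symmetry of the associator). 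Second, your refined two-block decomposition is also sound, but it too closes without axiom (1): after cancelling the anti-associators via axiom (2), the leftover left-nested terms of the right-multiplication block are literally the negatives of the terms of the left-multiplication block, so the appeal to axiom (1) in your final sentence is redundant (harmless, since those terms would also cancel pairwise by axiom (1)). With commutativity of $\ast$ immediate from the definition, either organization of the twelve terms completes the proof of the lemma.
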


\begin{pro}

Let $(A,\cdot)$ be an anti-commutative anti-associative algebra and $d: A \to A$ be a derivation on $A$. Then $(A,\circ)$ is an anti-Novikov algebra, where
$$ a \circ b= d(a)\cdot b,\ \forall a,b \in A.$$

\end{pro}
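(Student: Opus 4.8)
The plan is to verify directly that the operation $a \circ b = d(a)\cdot b$ satisfies the two anti-Novikov axioms, using only that $(A,\cdot)$ is anti-commutative ($x\cdot y = -y\cdot x$), anti-associative ($(x\cdot y)\cdot z = -x\cdot(y\cdot z)$), and that $d$ is a derivation ($d(x\cdot y) = d(x)\cdot y + x\cdot d(y)$). Everything reduces to rewriting the relevant triple products of $\circ$ in terms of $\cdot$ and then collapsing associators with the anti-associative law.

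For the first axiom, $a\circ(b\circ c) = d(a)\cdot(d(b)\cdot c)$. By anti-associativity this equals $-(d(a)\cdot d(b))\cdot c$. Since $\cdot$ is anti-commutative, $d(a)\cdot d(b) = -d(b)\cdot d(a)$, so $a\circ(b\circ c) = (d(b)\cdot d(a))\cdot c = -d(b)\cdot(d(a)\cdot c) = -b\circ(a\circ c)$, which is exactly what is required. Note this step only uses the derivation property implicitly (just that $d(a),d(b)\in A$); the real content is the interplay of anti-commutativity and anti-associativity.

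For the second axiom, I would expand $(a,b,c)^-_\circ = (a\circ b)\circ c + a\circ(b\circ c)$. The first term is $(a\circ b)\circ c = d(a\circ b)\cdot c = d(d(a)\cdot b)\cdot c = \big(d^2(a)\cdot b + d(a)\cdot d(b)\big)\cdot c$, using the derivation property on $d(a)\cdot b$. The second term is $d(a)\cdot(d(b)\cdot c) = -(d(a)\cdot d(b))\cdot c$ by anti-associativity. Hence $(a,b,c)^-_\circ = \big(d^2(a)\cdot b + d(a)\cdot d(b)\big)\cdot c - (d(a)\cdot d(b))\cdot c = (d^2(a)\cdot b)\cdot c$. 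Therefore $(a,b,c)^-_\circ + (a,c,b)^-_\circ = (d^2(a)\cdot b)\cdot c + (d^2(a)\cdot c)\cdot b$, and I must show this vanishes. By anti-associativity, $(d^2(a)\cdot b)\cdot c = -d^2(a)\cdot(b\cdot c)$ and $(d^2(a)\cdot c)\cdot b = -d^2(a)\cdot(c\cdot b)$; their sum is $-d^2(a)\cdot(b\cdot c + c\cdot b)$, which is zero because $b\cdot c + c\cdot b = 0$ by anti-commutativity. This completes the verification.

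The main thing to watch is bookkeeping of signs: both the anti-associative identity and the anti-commutative identity carry a minus sign, so it is easy to drop or double one. I do not anticipate a genuine obstacle — the statement is a routine but sign-sensitive computation, and the key simplification is that the term $d(a)\cdot d(b)$ produced by the derivation rule in $(a\circ b)\circ c$ cancels exactly against the term coming from anti-associativity in $a\circ(b\circ c)$, leaving only the $d^2(a)$ term, which is then killed by anti-commutativity when symmetrized in $b$ and $c$.
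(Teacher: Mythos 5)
Your verification is correct and follows essentially the same route as the paper: both axioms are checked by direct expansion, with the $(d(a)\cdot d(b))\cdot c$ term cancelling against the anti-associativity term and the remaining $(d^2(a)\cdot b)\cdot c$ killed by anti-commutativity after symmetrizing in $b$ and $c$. The only difference is the (immaterial) order in which anti-commutativity and anti-associativity are applied in the first axiom; the signs all check out.
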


\begin{proof}
Let $a,b, c \in A$. Then we have
\begin{align*}
    a \circ (b \circ c)=&d(a)\cdot (d(b)\cdot c) 
    = -d(a)\cdot (c \cdot d(b)) 
    = (d(a) \cdot c)\cdot d(b) \\
    =& -d(b) \cdot (d(a) \cdot c) 
    = -b \circ (a \circ c).
\end{align*}
On the other hand,
\begin{align*}
&(a \circ b) \circ c+a \circ (b \circ c)=
d(d(a) \cdot b)\cdot c+d(a)\cdot (d(b) \cdot c) \\
=& (d^2(a)\cdot b)\cdot c+(d(a)\cdot d(b))\cdot c+ d(a) \cdot (d(b) \cdot c) \\
=& -d^2(a)\cdot (b \cdot c) =d^2(a)\cdot (c \cdot b).
\end{align*}
Since $b$ and $c$ have symmetric role, then we have 
\begin{align*}
&(a \circ c) \circ b+a \circ (c \circ b)=
=-d^2(a)\cdot (c \cdot b). 
\end{align*}
Therefore, $(A,\circ)$ is an anti-Novikov algebra. 
\end{proof}

Now, we introduce the notion of mock-Gel'fand Dorfman bialgebra. 

\begin{defi}
A mock-Gel'fand Dorfman bialgebra is a vector space $A$ with two operations $\circ$ and $\ast$, such that  $(A,\circ)$ is an anti-Novikov algebra, $(A,\ast)$ is a Jacobi-Jordan algebra and the following compatibility condition holds (for any $a,b,c \in A$):
\begin{align}\label{compatibility cond}
  &  a \circ (b \ast c)+ a \ast (b \circ c)+ (a \ast b) \circ c+ b \circ (a \ast c)+ b \ast (a \circ c)=0.
\end{align}
Such an algebra will be denoted by $(A,\ast,\circ)$. 
\end{defi}
In the sequel, we shall
simply call a mock-Gel’fand-Dorfman bialgebra a mock-GD bialgebra.\\
The following construction of mock-GD bialgebras is from anti-Novikov algebras, analogous to the
fundamental construction of Gel’fand-Dorfman bialgebras from Novikov algebras via the commutator bracket. 

\begin{thm}
Let $(A,\circ)$ be an anti-Novikov algebra. Define a new product $\ast$ on $A$ by
\begin{equation}
    a \ast b= a \circ b + b \circ a,\ \text{for}\ a,b, \in A.
\end{equation}
Then $(A, \ast,\circ)$ is a mock-GD bialgebra. 
\end{thm}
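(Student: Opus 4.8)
The plan is to verify directly that the pair of operations $(\ast,\circ)$ satisfies the three defining conditions of a mock-GD bialgebra: that $(A,\circ)$ is an anti-Novikov algebra, that $(A,\ast)$ is a Jacobi-Jordan algebra, and that the compatibility condition \eqref{compatibility cond} holds. The first condition is free, since $(A,\circ)$ is anti-Novikov by hypothesis. The second condition is exactly the content of Lemma \ref{anti-Nov to JJ alg}, so it also comes for free. Hence the only real work is to establish the compatibility condition \eqref{compatibility cond}.

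To prove \eqref{compatibility cond}, I would substitute $a\ast b = a\circ b + b\circ a$ (and the analogous expressions for $b\ast c$ and $a\ast c$) into the left-hand side and expand everything into a sum of terms each involving two applications of $\circ$. Concretely, the five summands expand as follows: $a\circ(b\ast c) = a\circ(b\circ c) + a\circ(c\circ b)$; $a\ast(b\circ c) = a\circ(b\circ c) + (b\circ c)\circ a$; $(a\ast b)\circ c = (a\circ b)\circ c + (b\circ a)\circ c$; $b\circ(a\ast c) = b\circ(a\circ c) + b\circ(c\circ a)$; and $b\ast(a\circ c) = b\circ(a\circ c) + (a\circ c)\circ b$. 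This yields a sum of ten $\circ$-monomials, which I would group to exploit the two anti-Novikov axioms. The associator-type identity $(a,b,c)^-_\circ + (a,c,b)^-_\circ = 0$, i.e. $(a\circ b)\circ c + a\circ(b\circ c) + (a\circ c)\circ b + a\circ(c\circ b) = 0$, together with the left-symmetry-type identity $a\circ(b\circ c) = -b\circ(a\circ c)$, should collapse the ten terms to zero. For instance, the pair $a\circ(b\circ c) + b\circ(a\circ c)$ vanishes immediately by the first anti-Novikov axiom, and similar cancellations should handle the remaining terms after one uses the symmetric versions of the axioms obtained by permuting $b$ and $c$.

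The main obstacle, such as it is, will be bookkeeping: making sure all ten monomials are accounted for and correctly paired, and correctly invoking the anti-Novikov identities both in their stated form and in the form obtained by swapping the last two arguments (which is legitimate since the associator axiom is symmetric in $b$ and $c$). One clean way to organize this is to collect the terms into two groups — those that are ``left-symmetric pairs'' $x\circ(y\circ z) + y\circ(x\circ z)$ and those that assemble into associator expressions $(x,y,z)^-_\circ + (x,z,y)^-_\circ$ — and show each group sums to zero. I expect no genuine difficulty beyond this careful matching, so the proof should reduce to a short, transparent computation.

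\begin{proof}
By hypothesis $(A,\circ)$ is an anti-Novikov algebra, and by Lemma \ref{anti-Nov to JJ alg} the product $a\ast b = a\circ b+b\circ a$ makes $(A,\ast)$ a Jacobi-Jordan algebra. It remains to check the compatibility condition \eqref{compatibility cond}. Expanding each $\ast$ in terms of $\circ$, the left-hand side of \eqref{compatibility cond} becomes
\begin{align*}
& a\circ(b\circ c) + a\circ(c\circ b) + a\circ(b\circ c) + (b\circ c)\circ a + (a\circ b)\circ c + (b\circ a)\circ c \\
& \quad + b\circ(a\circ c) + b\circ(c\circ a) + b\circ(a\circ c) + (a\circ c)\circ b.
\end{align*}
Group the terms as
\begin{align*}
& \big( a\circ(b\circ c) + b\circ(a\circ c) \big) + \big( a\circ(b\circ c) + (a\circ b)\circ c + (a\circ c)\circ b + a\circ(c\circ b) \big) \\
& \quad + \big( b\circ(a\circ c) + (b\circ a)\circ c + (b\circ c)\circ a + b\circ(c\circ a) \big).
\end{align*}
The first bracket vanishes by the first anti-Novikov axiom $a\circ(b\circ c) = -b\circ(a\circ c)$. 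The second bracket is $(a,b,c)^-_\circ + (a,c,b)^-_\circ$, which vanishes by the second anti-Novikov axiom. The third bracket is $(b,a,c)^-_\circ + (b,c,a)^-_\circ$, which again vanishes by the second anti-Novikov axiom. Hence \eqref{compatibility cond} holds, and $(A,\ast,\circ)$ is a mock-GD bialgebra.
\end{proof}
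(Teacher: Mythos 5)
Your proposal is correct and follows essentially the same route as the paper: expand all five $\ast$-terms into ten $\circ$-monomials, cancel one pair $a\circ(b\circ c)+b\circ(a\circ c)$ by the first anti-Novikov axiom, and absorb the remaining eight terms into $(a,b,c)^-_\circ+(a,c,b)^-_\circ$ and $(b,a,c)^-_\circ+(b,c,a)^-_\circ$, each vanishing by the second axiom. The bookkeeping in your grouping is accurate, so nothing further is needed.
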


\begin{proof}
By Lemma \ref{anti-Nov to JJ alg}, $(A,\ast)$ is a Jacobi-Jordan algebra. So we only, prove the compatibility condition \eqref{compatibility cond}.  For any $a,b,c \in A$, we have
\begin{align*}
    &  a \circ (b \ast c)+ a \ast (b \circ c)+ (a \ast b) \circ c+ b \circ (a \ast c)+ b \ast (a \circ c)  \\
    =& a \circ (b \circ c)+ a \circ (c \circ b)+  a \circ (b \circ c) + (b \circ c) \circ a +  (a \circ b) \circ c \\
    &+  (b \circ a) \circ c +  b \circ (a \circ c)+  b \circ (c \circ a)+  b \circ (a \circ c)+ (a \circ c) \circ b  \\
    =& (a \circ b) \circ c+  a \circ (b \circ c)+  (a \circ c )\circ b +  a \circ (c \circ b) \\
    &+  (b \circ a) \circ c+  b \circ (a \circ c)+ (b \circ c) \circ a +   b \circ (c \circ a)\\
    &+  a \circ (b \circ c) +  b \circ (a \circ c)\\=&0.
\end{align*}
\end{proof}

\begin{pro}
Let $(A,\cdot)$ be an anti-commutative anti-associative algebra and $d:A \to A$ be a derivation. Then $(A,\ast,\circ)$ is a mock-GD algebra, where $\ast$ and $\circ$ are defined by
\begin{align}
a \circ b=d(a) \cdot b,\quad a \ast b=d(a) \cdot b+  d(b) \cdot a,\ \forall a,b \in A.    
\end{align}
\end{pro}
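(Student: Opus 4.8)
The plan is to verify the two defining properties of a mock-GD bialgebra directly: that $(A,\ast)$ is a Jacobi-Jordan algebra and that the compatibility condition \eqref{compatibility cond} holds, given only that $(A,\cdot)$ is anti-commutative and anti-associative with a derivation $d$.

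First I would observe that the preceding proposition already shows $(A,\circ)$ with $a\circ b=d(a)\cdot b$ is an anti-Novikov algebra. Since $a\ast b = d(a)\cdot b + d(b)\cdot a$ is exactly $a\circ b + b\circ a$, Lemma \ref{anti-Nov to JJ alg} immediately gives that $(A,\ast)$ is a Jacobi-Jordan algebra. So that half is free, and I would state it in one line.

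The remaining work is the compatibility condition. Here I would invoke the theorem just proved: for any anti-Novikov algebra $(A,\circ)$, the pair $(A,\ast,\circ)$ with $a\ast b = a\circ b + b\circ a$ is a mock-GD bialgebra. Since our $\ast$ is literally this symmetrized product of the anti-Novikov algebra $(A,\circ)$ coming from $d$, the compatibility condition \eqref{compatibility cond} follows for free from that theorem. Thus the proof is essentially a two-step reduction — identify $\circ$ as anti-Novikov (previous Proposition), identify $\ast$ as its symmetrization, then quote the Theorem — with no genuine computation needed.

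The only place where I would be slightly careful is making sure the hypotheses line up: the previous Proposition requires $(A,\cdot)$ to be anti-commutative \emph{and} anti-associative and $d$ to be a derivation, which are exactly the hypotheses here, so there is no gap. If one preferred a self-contained argument one could instead expand everything in terms of $d$ and $\cdot$ and use anti-commutativity, anti-associativity, and the Leibniz rule for $d$ to collapse the five terms; the main obstacle in that approach would be bookkeeping the $d^2$ terms and the mixed $d(a)\cdot d(b)$ terms, but this is unnecessary given the structural route. I would therefore write:

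\begin{proof}
By the previous proposition, $(A,\circ)$ with $a\circ b = d(a)\cdot b$ is an anti-Novikov algebra. Since $a\ast b = d(a)\cdot b + d(b)\cdot a = a\circ b + b\circ a$, the claim follows immediately from the preceding theorem applied to the anti-Novikov algebra $(A,\circ)$.
\end{proof}
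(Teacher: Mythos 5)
Your reduction is correct and is exactly the intended argument: the paper states this proposition without proof precisely because it is the immediate combination of the preceding proposition (that $a\circ b=d(a)\cdot b$ makes $(A,\circ)$ anti-Novikov) with the theorem that the symmetrization $a\ast b=a\circ b+b\circ a$ of any anti-Novikov algebra yields a mock-GD bialgebra. Your check that the hypotheses match is also the right (and only) point of care, so nothing is missing.
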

\begin{ex}\label{example with derivation}
Let $A$ be a $3$-dimensional vector space with basis $\{e_1,e_2,e_3\}$. Define the following product on $A$:
$ e_1\cdot e_2=-e_2\cdot e_1=e_3.$  Then it is easy to check that $(A,\cdot)$ is an anti-commutative anti-associative algebra. Consider the linear map $d:A \to A$ defined by:
$$ d(e_1)=e_1,\quad d(e_2)=2e_2,\quad d(e_3)=3e_3.$$
Then $d$ is a derivation on $A$. Therefore, according to the above proposition, $(A,\ast,\ci)$ is a mock-GD bialgebra, where
$$e_i \ci e_i=e_i\ci e_i=0,\quad e_1\ci e_2=e_3,\quad e_2\ci e_1=-2e_3,\quad e_1\ast e_2=e_2\ast e_1=e_3.$$
\end{ex}

\subsection{Quadratic Jacobi-Jordan conformal algebras and their characterization}
\begin{defi}
Let $A$ be a Jacobi-Jordan conformal algebra. If there exists a vector space $V$ such that $A=\mathbb{F}[\pr]V$ is an $\mathbb{F}[\pr]$-module over $V$ and for all $a,b\in V$ the $\l$-product is of the following form:
$$ a_\l b=\pr u+ \l v+ w, $$
where $u,v, w \in V$, then $A$ is called a quadratic Jacobi-Jordan conformal algebra. 
\end{defi}
 We have the following characterization result.
\begin{thm}

Let $V$ be a vector space equipped with two operations $\ast$ and $\circ$. Let $A=\mathbb{F}[\pr]V$ be  the free  $\mathbb{F}[\pr]$-module over $V$. Define the $\l$-product $\cdot_\l: A \otimes A \to A[\l]$ on $A$  by
\begin{align}
 u_\l v=u \ast v+ \pr(u \circ v)+\l (u \circ v-v \circ u),\ \ u,v \in A.
\end{align}
Then $(A,\cdot_\l\cdot)$ is a quadratic Jacobi-Jordan conformal algebra if and only if $(V,\ast, \circ)$ is a mock-GD algebra. 

\end{thm}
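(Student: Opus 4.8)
The plan is to translate the conformal axioms for $(A,\cdot_\l)$ into polynomial identities in $\l$ (and $\mu$) whose coefficients, after using the $\mathbb{F}[\pr]$-module structure $A=\mathbb{F}[\pr]V$, become exactly the defining identities of a mock-GD bialgebra on $V$. First I would record that conformal sesquilinearity is automatic: the prescribed $\l$-product $u_\l v = u\ast v + \pr(u\circ v) + \l(u\circ v - v\circ u)$ manifestly has the form $\pr(\cdot) + \l(\cdot) + (\cdot)$ with the three slots in $V$, so $A$ is a quadratic conformal algebra by construction, and one checks $\pr u_\l v = -\l\, u_\l v$ and $u_\l \pr v = (\l+\pr) u_\l v$ directly by extending $\cdot_\l$ $\mathbb{F}[\pr]$-sesquilinearly (this is the routine part, where the coefficient of $\pr^2$, $\pr$, $1$ must each match). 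So the content is: commutativity \eqref{commutativity} and the conformal Jacobi identity \eqref{conf Jacobi identity} hold for all $u,v,w\in V$ if and only if $(V,\ast,\circ)$ is a mock-GD bialgebra.

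For commutativity, I would compute $v_{-\l-\pr}u$ from the formula: $v_{-\l-\pr}u = v\ast u + \pr(v\circ u) + (-\l-\pr)(v\circ u - u\circ v) = v\ast u + \l(u\circ v - v\circ u) + \pr\big((v\circ u) - (v\circ u - u\circ v)\big) = v\ast u + \l(u\circ v - v\circ u) + \pr(u\circ v)$. Comparing with $u_\l v$, equality for all $\l,\pr$ forces precisely $u\ast v = v\ast u$, i.e. commutativity of $\ast$ — which is built into the definition of a Jacobi-Jordan algebra. So \eqref{commutativity} is equivalent to $\ast$ being commutative, costing nothing extra.

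The heart of the proof is \eqref{conf Jacobi identity}: expand $u_\l(v_\mu w) + (u_\l v)_{\l+\mu}w + v_\mu(u_\l w) = 0$ using the formula twice in each term, pushing the inner $\pr$'s out through sesquilinearity (so an inner $\pr(v\circ w)$ becomes a factor that acts as $-\mu$ when it is the "$\l$-slot" argument, etc.). Each of the three terms becomes a polynomial in $\l,\mu,\pr$ of total degree $\le 2$ with coefficients that are quadratic words in $\ast$ and $\circ$ applied to $u,v,w$. Setting the whole sum to zero and collecting coefficients of the monomials $\pr^2,\ \pr\l,\ \pr\mu,\ \l^2,\ \l\mu,\ \mu^2,\ \pr,\ \l,\ \mu,\ 1$ yields a finite list of multilinear identities in $V$; the claim is that this system is equivalent to the three axioms of a mock-GD bialgebra (the two anti-Novikov axioms $a\circ(b\circ c) = -b\circ(a\circ c)$ and $(a,b,c)^-_\circ + (a,c,b)^-_\circ = 0$, the Jacobi-Jordan identity for $\ast$, and the compatibility \eqref{compatibility cond}), with some of the lower-degree coefficients being automatic consequences of the higher-degree ones together with the commutativity of $\ast$ (and possibly symmetrization in $v\leftrightarrow w$ arising from the $\l\leftrightarrow$ structure). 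I expect the main obstacle to be exactly this bookkeeping: making sure each mock-GD axiom is actually produced by some coefficient (and conversely that no extra, stronger relation sneaks in), which requires care with how $\pr$ interacts when the same element sits in an "inner" versus "outer" slot, and with the fact that $(u_\l v)_{\l+\mu}w$ forces substitutions like $\pr(u\circ v)\mapsto -(\l+\mu)(u\circ v)$. I would organize the computation by first treating the "pure $\circ$" part (coefficients of the top-degree monomials $\l^2,\l\mu,\mu^2$ and their $\pr$-companions), which should give the anti-Novikov axioms, then the "pure $\ast$" part (the constant term), which gives the Jacobi-Jordan identity, and finally the mixed linear-in-$(\l,\mu,\pr)$ terms, which should assemble into the compatibility condition \eqref{compatibility cond}; the "only if" direction is then read off by running the same coefficient comparison backwards.
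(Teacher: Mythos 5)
Your proposal follows essentially the same route as the paper: check that commutativity \eqref{commutativity} forces $\ast$ to be commutative (your computation of $v_{-\l-\pr}u$ matches the paper's), then expand the conformal Jacobi identity \eqref{conf Jacobi identity} on generators of $V$ and compare coefficients of the monomials in $\l,\mu,\pr$, identifying the resulting system with the mock-GD axioms. The only part you leave unexecuted is exactly the bookkeeping the paper carries out — it records ten coefficient identities and shows several are redundant (the $\pr^2$ and $\mu^2$ coefficients give the anti-Novikov axioms, the constant term gives the Jacobi-Jordan identity for $\ast$, and the $\pr$-linear one gives the compatibility \eqref{compatibility cond}, the rest being consequences) — but your plan is correct and coincides with the paper's proof.
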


\begin{proof}
Suppose that $A$ is a quadratic Jacobi-Jordan conformal algebra. By its definition, we set  
\begin{align}
 u_\l v=u \ast v+ \pr(u \circ v)+\l (u \circ v-v \circ u),\ \forall u,v \in V,
\end{align}
where $\ast$ and $\circ$ are two $\mathbb{F}$-bilinear maps from $V \times V \to V$. 
Next, we consider axioms \eqref{commutativity} and \eqref{conf Jacobi identity}. For any $a,b, c \in A$, we have
\begin{align*}
b_{-\l-\pr}a=&b \ast a +\pr(b \circ a)-(\l+\pr)(b \circ a-a \circ b) \\
=& b \ast a+ \pr(a \circ b)+ \l (a \circ b-b \circ a).
\end{align*}
In addition, we have 
$a_\l b=a \ast b + \pr(a \circ b)+ \l (a \circ b-b \circ a)$.    
Since \eqref{commutativity} holds, then 
\begin{align}
& a \ast b=b \ast a. \label{equiv0}
\end{align}
On the other hand,
\begin{align*}
 & a_\l(b_\mu c)=a_\l(b \ast c+ \pr(b \circ c)+\mu(b \circ c-c \circ b))  \\
 =&a_\l(b \ast c)+ (\l+\pr)a_\l (b \circ c)+\mu a_\l(b \circ c-c \circ b) \\
 =&a \ast (b \ast c)+ \pr(a \circ (b \ast c))+\l(a \circ (b \ast c)-(b \ast c)\circ a) \\
 +&(\l+\pr)a \ast (b \circ c)+(\l+\pr)\pr (a \circ (b \circ c))+\l(\l+\pr)(a \circ (b \circ c)-(b \circ c)\circ a) \\
 +&\mu a \ast(b \circ c-c \circ b)+\mu \pr a \circ (b \circ c-c \circ b)+\mu \l(a \ci (b \ci c-c \ci b)-  (b \ci c-c \ci b)\ci a) \\
 =& a \ast (b \ast c)+ \pr \big(a \ci(b \ast c)+ a \ast(b \ci c)\big)+\l\big(a\ci (b \ast c)-(b \ast c)\ci a+a \ast (b \ci c) \big) \\
 +& \mu a \ast(b \ci c-c \ci b)+ \l \pr\big(2a \ci (b \ci c)-(b \ci c)\ci a \big) +\l \mu \big(a \ci (b \ci c-c \ci b)-(b \ci c-c \ci b)\ci a \big)\\
 +&\mu \pr a \ci (b \ci c-c \ci b)+\pr^2 a \ci (b \ci c)+ \l^2 \big(a \ci (b \ci c)-(b \ci c) \ci a\big).
\end{align*}
Similarly, we get
\begin{align*}
&b_\mu(a_\l c)= b \ast (a \ast c)+ \pr \big(b \ci(a \ast c)+ b \ast(a \ci c)\big)+\mu\big(b\ci (a \ast c)-(a \ast c)\ci b+b \ast (a \ci c) \big) \\
 +& \l b \ast(a \ci c-c \ci a)+ \mu \pr\big(2b \ci (a \ci c)-(a \ci c)\ci b \big) +\l \pr b \ci (a \ci c-c \ci a)\\
 +& \l \mu \big(b \ci (a \ci c-c \ci a)-(a \ci c-c \ci a)\ci b \big)+\pr^2 b \ci (a \ci c)+ \mu^2 \big(b \ci (a \ci c)-(a \ci c) \ci b\big).   
\end{align*}
Furthermore, we can obtain
\begin{align*}
&(a_\l b)_{\l+\mu}c=(a\ast b)\ast c+ \pr (a \ast b)\ci c+ \l \big((a\ast b)\ci c-c \ci (a \ast b) -(b \ci a)\ast c \big)  \\
+&\mu \big((a \ast b)\ci c-c \ci (a \ast b)-(a \ci b)\ast c \big)-\l \pr (b \ci a) \ci c-\mu \pr (a \ci b)\ci c \\
+& \l \mu \big((b \ci a-a\ci b)\ci c+c \ci (a \ci b+b \ci a) \big)-\mu^2\big((a\ci b)\ci c-c\ci(a\ci b) \big) \\
+& \l^2 \big( c \ci (b\ci a)-(b\ci a)\ci c\big). 
\end{align*}
According to the identity \eqref{conf Jacobi identity} and comparing the coefficients of $\l^2,\mu^2,\pr^2,\l\mu, \l \pr, \mu \pr,\l,\mu,\pr$ and $\l^0\mu^0\pr^0$, we obtain
\begin{align}
&0=a \ast (b \ast c)+(a \ast b)\ast c+b \ast (a \ast c),\label{equiv1}\\
&0=a \ci(b \ast c)+a\ast (b\ci c)+b \ci(a\ast c)+b\ast(a\ci c)+(a\ast b)\ci c,\label{equiv2}\\
&0=a\ci(b\ast c)-(b\ast c)\ci a+a\ast (b\ci a) +b \ast(a\ci c-c\ci a) \nonumber\\
& \qquad + (a\ast b)\ci c-c \ci (a\ast b)-(b\ci a)\ast c,\label{equiv3}\\
&0=a\ast (b \ci c-c\ci b)+b\ci (a\ast c)-(a\ast c)\ci b+b\ast(a\ci c)\nonumber \\
&\qquad  +(a\ast b)\ci c-c \ci (a\ast b)-(a\ci b)\ast c,\label{equiv4}\\
&0=2a \ci (b \ci c)-(b\ci c)\ci a+b\ci (a\ci c-c \ci a)-(b\ci a)\ci c,\label{equiv5}\\
&0=a \ci(b\ci c-c\ci b)+2b\ci(a\ci c)-(a\ci c)\ci b-(a\ci b)\ci c,\label{equiv6}\\
&0=a \ci (b \ci c-c \ci b)-(b \ci c-c \ci b)\ci a+b \ci (a \ci c-c \ci a)\nonumber\\
&\quad -(a \ci c-c \ci a)\ci b
+(b \ci a-a\ci b)\ci c+c \ci (a \ci b+b \ci a),\label{equiv7}\\
&0=a\ci(b\ci c)-(b\ci c)\ci a+c\ci(b\ci a)-(b\ci a)\ci c,\label{equiv8}\\
&0=b\ci (a\ci c)-(a\ci c)\ci b+c\ci(a\ci b)-(a\ci b)\ci c,\label{equiv9}\\
&0=a \ci (b\ci c)+b \ci (a \ci c).\label{equiv10}
\end{align}
By \eqref{equiv0} and \eqref{equiv1}, we deduce that $(V,\ast)$ is a Jacobi-Jordan algebra. Note that  \eqref{equiv8} and \eqref{equiv9} are the same. So if we take, for example \eqref{equiv9} with \eqref{equiv10}, we get that $(A,\circ)$ is an anti-Novikov algebra. On the other hand, identities \eqref{equiv5}, \eqref{equiv6} and \eqref{equiv7} are equivalent to \eqref{equiv9}.
Furthermore, equalities \eqref{equiv3} and \eqref{equiv4} are equivalent to \eqref{equiv2} which is just the compatibility condition \eqref{compatibility cond}. Hence, $(A,\ast,\circ)$ is a mock-GD bialgebra. 

Conversely, if $(A,\ast,\circ)$ is a mock-GD bialgebra, then by  a  similar computation, we can prove according to the above discussions that $(A,\cdot_\l)$ is a Jacobi-Jordan conformal algebra. 
The proof is finished. 

\end{proof}

\begin{ex}
Consider the mock-GD bialgebra constructed in Example \ref{example with derivation}. Then there is an associated Jacobi-Jordan conformal algebra whose $\l$-product is given by
$$ e_{1\ \l} e_2=(\pr+3\l-1)e_3,\quad e_{2\ \l} e_1=-(2\pr+3\l+1)e_3,$$
where the non given products are zeros.   
\end{ex}

\section{The $\mathbb{C}[\pr]$-split extending structures problem}\label{ExtendingStructures}
In this section, the  $\mathbb{C}[\pr]$-split extending structures problem for Jacobi-Jordan conformal algebras is studied. We introduce a definition of unified product of a given Jacobi-Jordan conformal
algebra $J$ and a given  $\mathbb{C}[\pr]$-module $K$. This product includes some other interesting products of Jacobi-Jordan conformal algebras such as twisted product.
Using this product, a cohomological type object is constructed to provide a theoretical answer to the  $\mathbb{C}[\pr]$-split extending structures problem.

We introduce the following definition which is relevant  for studying
the $\mathbb{C}[\partial]$-split extending structures problem.

\begin{defi}
Let $(J, \cdot_\lambda)$ be a Jacobi-Jordan conformal algebra, $K$ a $\mathbb{C}[\partial]$-module and $E=J\oplus K$, where the direct sum is the sum of $\mathbb{C}[\partial]$-modules. Let $\varphi: E\rightarrow E$ be a $\mathbb{C}[\partial]$-module homomorphism. We consider  the following
diagram:
$$\xymatrix{ {J}\ar[d]^{Id}\ar[r]^{i}& {E}\ar[d]^{\varphi}\ar[r]^{\pi} & {K}\ar[d]^{Id} \\
{J}\ar[r]^{i}&{E}\ar[r]^{\pi} &{K} }$$
where $\pi: E\rightarrow K$ is the natural projection of $E=J\oplus K$ onto $K$ and
$i: J\rightarrow E$ is the inclusion map. If the left square
(resp. the right square) of the  above diagram are commutative,
we say that
$\varphi: E\rightarrow E$ \emph{stabilizes} $J$ (resp. \emph{co-stabilizes} $K$).

Let $\ast_\lambda$ and $\square_\lambda$ be two Jacobi-Jordan conformal algebra structures on $E$ both containing $J$ as a Jacobi-Jordan conformal subalgebra.
If there exists a Jacobi-Jordan conformal algebra isomorphism $\varphi: (E, \ast_\lambda )\rightarrow (E,\square_\lambda)$ which stabilizes
$J$, then $\ast_\lambda $ and $\square_\lambda$ are called \emph{equivalent}. In this case, we denote it by
$(E,\ast_\lambda)\equiv (E,\square_\lambda)$.
If there exists a Jacobi-Jordan conformal algebra isomorphism $\varphi: (E, \ast_\lambda )\rightarrow (E,\square_\lambda)$ which stabilizes
$J$ and co-stabilizes $K$, then $\ast_\lambda$ and $\square_\lambda$ are called \emph{cohomologous}. In this case, we denote it by
$(E,\ast_\lambda)\approx (E,\square_\lambda)$.
\end{defi}

Obviously, $\equiv $ and $\approx $ are equivalence relations on the set of all Jacobi-Jordan conformal algebra structures on
$E$ containing $J$ as a Jacobi-Jordan conformal subalgebra. We denote  the set
of all equivalence classes via $\equiv $ (resp. $\approx $) by $\text{CExtd}(E,J)$ (resp. $\text{CExtd}^{'}(E,J)$). It is easy to see that $\text{CExtd}(E,J)$ is the classifying object of the
$\mathbb{C}[\partial]$-split extending structures problem, and there exists a canonical projection $\text{CExtd}^{'}(E,J)\twoheadrightarrow \text{CExtd}(E,J) $.

\subsection{Unified products of Jacobi-Jordan conformal algebras}
In this section, a unified product of Jacobi-Jordan conformal algebras is defined and used to provide a theoretical answer to the $\mathbb{C}[\partial]$-split extending structures problem for Jacobi-Jordan conformal algebras.

\begin{defi}
Let $(J,\cdot_\lambda)$ be a Jacobi-Jordan conformal algebra  and $K$ a $\mathbb{C}[\partial]$-module. An \emph{extending datum} of $J$ by $K$ is
a system $\mathcal U(J, K)=\{ \lhd_\lambda, \rhd_\lambda,  \omega_\lambda,  \circ_\lambda \}$ consisting of four conformal bilinear maps 
\begin{eqnarray*}
&\lhd_\lambda: K\times J\rightarrow K[\lambda],&~~~\rhd_\lambda: K\times J\rightarrow J[\lambda],\\
&\omega_\lambda: K\times K \rightarrow J[\lambda],&~~~\circ_\lambda: K\times K \rightarrow K[\lambda].
\end{eqnarray*}
Let $\mathcal U(J, K)=\{\lhd_\lambda, \rhd_\lambda,  \omega_\lambda, \circ_\lambda  \}$ be an extending datum of $J$ by $K$. Denote by  $J\natural K$ the $\mathbb{C}[\partial]$-module
$J\oplus K$ with the natural $\mathbb{C}[\partial]$-module action: $\partial (a, x)=(\partial a, \partial x)$ and the bilinear map $
\ast_\lambda : (J\oplus K)\times (J\oplus K)\rightarrow (J\oplus K)[\lambda]$ defined by
{\small
\begin{align}
(a,x)\ast_\lambda (b,y)
=(a_\lambda b+x\rhd_\lambda b+y \rhd_{-\l-\pr}a +\omega_\lambda(x,y),x\lhd_\lambda b+y \lhd_{-\l-\pr}a +x\circ_\lambda y), \label{ast product}
\end{align} }
for all $a$, $b\in J$, $x$, $y\in K$. Since $\lhd_\lambda$, $\rhd_\lambda$,  $\omega_\lambda$ and $\circ_\lambda$ are conformal bilinear maps, the $\ast_\lambda$-product defined by \eqref{ast product} satisfies conformal sesquilinearity. Then
$J\natural K$ is called the \textbf{unified product} of $J$ and $K$ associated with $\mathcal U(J, K)$ if it is a Jacobi-Jordan conformal algebra with the $\ast_\lambda$-product given by \eqref{ast product}. In this case, the extending datum $\mathcal U(J, K)$ is called a \textbf{Jacobi-Jordan conformal extending structure} of $J$ through  $K$. We denote by $\mathcal S(J,K)$ the set of all Jacobi-Jordan  conformal extending structures of $J$ through  $K$.

The maps  $\lhd_\lambda$ and $\rhd_\lambda$,  are called the actions of $\mathcal U(J, K)$ and $\omega_\l$ is called the cocycle of $\mathcal U(J, K)$. 
\end{defi}

\begin{rmk}
Note that by Eq. \eqref{ast product}, the following identities  hold in $J \natural J$, for any $a,b \in J$ and $x,y \in K$
\begin{align}
     (a,0)\ast_\l (b,0)=&(a_\l b,0),\qquad \qquad \quad (a,0)\ast_\l (0,y)=(y \rhd_{-\l-\pr}a, y \lhd_{-\l-\pr}a),\\
     (0,x)\ast_\l (b,0)=&(x \rhd_\l b,x \lhd_\l b),\qquad (0,x)\ast_\l (0,y)=(\omega_\l(x,y),x \circ_\l y).
\end{align}
\end{rmk}

The next theorem provides the set of axioms that need to be fulfilled by an extending datum $\mathcal U(J,K)$ such that $J \natural K$ is a unified product.

\begin{thm}\label{unified products}
Let $J$ be a Jacobi-Jordan conformal algebra, $K$ be a $\mathbb F|\pr]$-module and $\mathcal U(J,K)$ be an extending datum of $J$ by $K$. Then $J \natural K$ is a Jacobi-Jordan conformal algebra if and only if the following conditions are satisfied for any $a,b \in J$ and $x,y \in K$
\begin{align*}
  &  {\rm (U_1)}\qquad   \omega_\l(x,y)=\omega_{-\l-\pr}(y,x),\quad x\circ_\l y=y\circ_{-\l-\pr}x,\\
  &  {\rm (U_2)}\qquad     a_\l(x\rhd_{-\mu-\pr}b)+(x\lhd_{-\mu-\pr}b)\rhd_{-\l-\pr} a
 + x\rhd_{-\l-\mu-\pr}(a_\l b) \\
 &\qquad \qquad + b_\mu(x\rhd_{-\l-\pr}a)+(x\lhd_{-\l-\pr}a)\rhd_{-\mu-\pr}b=0,  \\
 &  {\rm (U_3)}\qquad     (x\lhd_{-\mu-\pr}b)\lhd_{-\l-\pr} a
 +x \lhd_{-\l-\mu-\pr}(a_\l b) 
 + 
 (x\lhd_{-\l-\pr}a)\lhd_{-\mu-\pr}b=0.,   \\
&  {\rm (U_4)}\qquad   (a_\l \omega_\mu(x,y)+(x\circ_\mu y)\rhd_{-\l-\pr}a
+(y\rhd_{-\l-\mu-\pr}(x\rhd_{-\l-\pr}a) \\
&\qquad \qquad +\omega_{\l+\mu}(x\lhd_{-\l-\pr}a,y)
+(x\rhd_\mu(y\rhd_{-\l-\pr}a)+\omega_\mu(x,y\lhd_{-\l-\pr}a)=0,  \\
 &  {\rm (U_5)}\qquad    (x\circ_\mu y)\lhd_{-\l-\pr}a) +
y\lhd_{-\l-\mu-\pr}(x\rhd_{-\l-\pr}a)+(x\lhd_{-\l-\pr}a)\circ_{\l+\mu}y) \\
&\qquad \qquad+ x\lhd_\mu(y\rhd_{-\l-\pr}a)+x\circ_\mu(y\lhd_{-\l-\pr}a))=0,  \\
  &  {\rm (U_6)}\qquad      x\rhd_\l \omega_\mu(y,z)+\omega_\l(x, y\circ_\mu z)
 +  y\rhd_{-\l-\mu-\pr}\omega_\l(x,y)+\omega_{\l+\mu}(x\circ_\l y,z) \\
&\hspace{2 cm} + y\rhd_\mu \omega_\l(x,z)+\omega_\mu(y,x \circ_\l z)=0,  \\
  &  {\rm (U_7)}\qquad  x\lhd_\l \omega_\mu(y,z)+x\circ_\l (y\circ_\mu z) 
 +  y\lhd_{-\l-\mu-\pr}\omega_\l(x,y) + (x\circ_\l y)_{\l+\mu}z  \\
& \hspace{2 cm} +
 y\lhd_\mu \omega_\l(x,z)+ y \circ_\mu(x\circ_\l z)=0.   
\end{align*}
\end{thm}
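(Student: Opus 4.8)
The plan is to reduce the statement to a direct verification: $J \natural K$ is a Jacobi-Jordan conformal algebra precisely when its $\ast_\lambda$-product, given by \eqref{ast product}, satisfies the commutativity axiom \eqref{commutativity} and the conformal Jacobi identity \eqref{conf Jacobi identity}, and then to expand both axioms on elements of the form $(a,x)$, $(b,y)$, $(c,z)$ and collect terms componentwise. Conformal sesquilinearity is automatic, as already remarked after \eqref{ast product}, since $\lhd_\lambda$, $\rhd_\lambda$, $\omega_\lambda$, $\circ_\lambda$ and $\cdot_\lambda$ are all conformal bilinear maps; so nothing needs to be checked there.

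First I would handle commutativity. Computing $(a,x)\ast_\lambda(b,y)$ and $(b,y)\ast_{-\lambda-\pr}(a,x)$ from \eqref{ast product} and comparing the $J$-components and $K$-components separately, one sees that the terms $a_\lambda b$ versus $b_{-\lambda-\pr}a$ agree because $J$ is already a Jacobi-Jordan conformal algebra (axiom \eqref{commutativity} for $J$), the mixed terms $x\rhd_\lambda b$, $y\rhd_{-\lambda-\pr}a$ and $x\lhd_\lambda b$, $y\lhd_{-\lambda-\pr}a$ simply swap roles, and what remains is exactly the requirement $\omega_\lambda(x,y)=\omega_{-\lambda-\pr}(y,x)$ and $x\circ_\lambda y=y\circ_{-\lambda-\pr}x$, i.e. $(\mathrm{U}_1)$.

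Next I would plug $(a,x)$, $(b,y)$, $(c,z)$ into the conformal Jacobi identity $(p_\lambda(q_\mu r))+((p_\lambda q)_{\lambda+\mu}r)+(q_\mu(p_\lambda r))=0$ using \eqref{ast product} twice, and expand. The output is an element of $J\oplus K$; setting the $J$-component and the $K$-component to zero gives two families of identities. Each family further decomposes according to how many of the three slots are occupied by a $K$-element: the purely-$J$ part vanishes because $J$ satisfies \eqref{conf Jacobi identity}; the parts with exactly one $K$-entry split (after using commutativity \eqref{commutativity} and the sesquilinearity identities \eqref{lemma} to normalize which variable is "bar-ed") into $(\mathrm{U}_2)$ from the $J$-component and $(\mathrm{U}_3)$ from the $K$-component; the parts with exactly two $K$-entries give $(\mathrm{U}_4)$ and $(\mathrm{U}_5)$ (the $J$- and $K$-components, respectively, of the coefficient of the three cyclic placements of the two $K$-arguments); and the part with all three entries in $K$ gives $(\mathrm{U}_6)$ and $(\mathrm{U}_7)$. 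Lemma \ref{Lemme one} is the tool that lets one rewrite nested expressions like $\pi(Tu)_\lambda(\pi(Tv)_{-\mu-\pr}w)$ — here the analogues with $\rhd_\lambda$, $\lhd_\lambda$ acting — into the normalized form appearing in $(\mathrm{U}_2)$–$(\mathrm{U}_7)$, and the sesquilinearity relations \eqref{lemma} take care of the $-\lambda-\mu-\pr$ shifts. The converse direction is then immediate: if $(\mathrm{U}_1)$–$(\mathrm{U}_7)$ all hold, reading the same computation backwards shows \eqref{commutativity} and \eqref{conf Jacobi identity} hold for $\ast_\lambda$.

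The main obstacle is purely bookkeeping: the full expansion of \eqref{conf Jacobi identity} on $(a,x),(b,y),(c,z)$ produces on the order of a few dozen terms, and the delicate point is getting every spectral shift right — deciding consistently, for each mixed term, whether to present it with argument $\lambda$, $\mu$, $-\lambda-\pr$, $-\mu-\pr$ or $-\lambda-\mu-\pr$, and invoking \eqref{lemma} and Lemma \ref{Lemme one} at exactly the right spots so that the terms group into the stated $(\mathrm{U}_i)$. I would organize this by first writing $(a,x)\ast_\lambda\big((b,y)\ast_\mu(c,z)\big)$ with all of its $J$- and $K$-components, doing the same for the other two cyclic-type terms, and then matching components; no single step is conceptually hard, but the match must be done carefully term by term. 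I would relegate the explicit term-by-term collation to the reader or a remark, exhibiting only one representative component (say the coefficient giving $(\mathrm{U}_4)$) in full to illustrate the pattern.
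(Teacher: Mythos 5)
Your proposal is correct and follows essentially the same route as the paper: commutativity of $\ast_\l$ is equivalent to $(\mathrm{U}_1)$, and the conformal Jacobi identity, expanded by bilinearity on the generators $(a,0)$ and $(0,x)$ (equivalently, by sorting terms according to how many slots carry a $K$-entry), yields $(\mathrm{U}_2)$--$(\mathrm{U}_7)$ from the $J$- and $K$-components of the three mixed cases, the purely-$J$ case vanishing because $J$ is already Jacobi-Jordan conformal. The only cosmetic difference is that the paper never needs Lemma \ref{Lemme one} or \eqref{lemma} here, since the product formula \eqref{ast product} already encodes the spectral shifts $-\l-\pr$, $-\mu-\pr$, $-\l-\mu-\pr$ directly.
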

Before going into the proof of the theorem, we make a few remarks on the compatibility in Theorem \ref{unified products}. Aside from the fact that $K$ is not a Jacobi-Jordan conformal algebra, the identities ({\rm $U_3$}) and ({\rm $U_4$})
are exactly the compatibility defining a  matched pair of Jacobi-Jordan conformal  algebras. 
 The compatibility condition ({\rm $U_5$}) is called the twisted module condition for the
action $\lhd_\l$; in the case when $K$ is a Jacobi-Jordan conformal algebra it measures how far $(J,\lhd_\l)$ is from being a left $K$-module.
The condition ({\rm $U_6$}) is called the twisted cocycle condition: if $\lhd_\l$ is the trivial action
and $(K, \circ_\l)$ is a Jacobi-Jordan conformal  algebra then the compatibility condition ({\rm $U_6$}) is exactly the  $2$-cocycle condition for Jacobi-Jordan conformal algebras. Finally, the identity ({\rm $U_7$}) is called the twisted conformal Jacobi condition:
it measures how far $\circ_\l$ is from being a Jacobi-Jordan conformal structure on $K$. If either $\rhd_\l$ or $\omega_\l$ is the
trivial map, then ({\rm $U_7$}) is equivalent to $\circ_\l$ being a Jacobi-Jordan conformal product on $K$.

\begin{proof}
Since $\rhd_\l,\ \lhd_\l,\ \omega_\l$ and $\circ_\l$ are conformal linear maps, then the conformal sesquilinearity of $\ast_\l$ is naturally checked.  Therefore, we need only to show that the commutativity condition Eq. \eqref{commutativity}
and the Jacobi identity Eq. \eqref{conf Jacobi identity} hold for $\ast_\l$ if and only if the identities ${\rm (U_1)}$-${\rm (U_7)}$ are satisfied. \\
First, we can easily prove that $\ast_\l$ is commutative, i.e.  $(a,x)\ast_\l (b,y)=(b,y)\ast_{-\l-\pr} (a,x)$  if and only if 
$\omega_\l(x,y)=\omega_{-\l-\pr}(y,x)$  and $ x\circ_\l y=y\circ_{-\l-\pr}x$, that is ${\rm (U_1)}$ holds.\\
Then $J\natural K$ is a Jacobi-Jordan conformal algebra if and only if 
 the conformal Jacobi-identity Eq. \eqref{conf Jacobi identity} holds for the $\ast_\l$-product, i.e.
 {\small
 \begin{align}
0= &(a,x)\ast_\l((b,y)\ast_\mu (c,z))+((a,x)\ast_\l (b,y))\ast_{\l+\mu}(c,z) 
 + (b,y)\ast_\mu((a,x)\ast_\l (c,z)),\label{F}
\end{align} }
 for all $a$, $b$, $c\in J$ and  $x$, $y$, $z\in K$. 
Since in $J \natural K$, we have $(a,x)=(a,0)+(0,x)$; it follows that \eqref{F} holds if and only if it holds for all generators of $J \natural K$, i.e. the set
$\{(a, 0) | a \in J \} \cup \{(0, x) | x \in K \}$. 
First, we should notice that \eqref{F} holds for the
triple $(a, 0), (b, 0), (c, 0)$ as we have
\begin{align*}
  &(a,0)\ast_\l((b,0)\ast_\mu (c,0))+((a,0)\ast_\l (b,0))\ast_{\l+\mu}(c,0) 
 + (b,0)\ast_\mu((a,0)\ast_\l (c,0)) \\
 =&(a_\l(b_\mu c)+(a_\l b)_{\l+\mu}c+b_\mu(a_\l c),0)=(0,0).
\end{align*}
Next, we prove that \eqref{F} holds for $(a, 0), (b, 0), (0, x)$ if and only if ${\rm (U_2)}$ and ${\rm (U_3)}$ hold. Indeed, we have
\begin{align*}
&(a,0)\ast_\l((b,0)\ast_\mu (0,x))+((a,0)\ast_\l (b,0))\ast_{\l+\mu}(0,x) 
 + (b,0)\ast_\mu((a,0)\ast_\l (0,x)) \\
 =&(a,0)\ast_\l(x \rhd_{-\mu-\pr}b, x \lhd_{-\mu-\pr}b) +(a_\l b,0)\ast_{\l+\mu}(0,x) +(b,0)\ast_\mu (x \rhd_{-\l-\pr}a, x \lhd_{-\l-\pr}a) \\
 =& (a_\l(x\rhd_{-\mu-\pr}b)+(x\lhd_{-\mu-\pr}b)\rhd_{-\l-\pr} a,(x\lhd_{-\mu-\pr}b)\lhd_{-\l-\pr} a) \\
 +& (x\rhd_{-\l-\mu-\pr}(a_\l b),x \lhd_{-\l-\mu-\pr}(a_\l b)) \\
 +& (b_\mu(x\rhd_{-\l-\pr}a)+(x\lhd_{-\l-\pr}a)\rhd_{-\mu-\pr}b, 
 (x\lhd_{-\l-\pr}a)\lhd_{-\mu-\pr}b) . 
\end{align*}
The right hand side varnishes if and only if 
{\small
\begin{align*}
&  a_\l(x\rhd_{-\mu-\pr}b)+(x\lhd_{-\mu-\pr}b)\rhd_{-\l-\pr} a
 + x\rhd_{-\l-\mu-\pr}(a_\l b) \\
 &\qquad + b_\mu(x\rhd_{-\l-\pr}a)+(x\lhd_{-\l-\pr}a)\rhd_{-\mu-\pr}b=0
\end{align*} }
and
\begin{align*}
 &(x\lhd_{-\mu-\pr}b)\lhd_{-\l-\pr} a
 +x \lhd_{-\l-\mu-\pr}(a_\l b) 
 + 
 (x\lhd_{-\l-\pr}a)\lhd_{-\mu-\pr}b=0.
\end{align*}

Thus ${\rm (U_2)}$ and ${\rm (U_3)}$ hold. 
Now, we show that \eqref{F} holds for $(a, 0), (0, x), (0, y)$ if and only if ${\rm (U_4)}$ and ${\rm (U_5)}$ hold. Indeed, we have
{\small
\begin{align*}
&(a,0)\ast_\l((0,x)\ast_\mu (0,y))+((a,0)\ast_\l (0,x))\ast_{\l+\mu}(0,y) 
 + (0,x)\ast_\mu((a,0)\ast_\l (0,y)) \\
= &(a,0)\ast_\l(\omega_\mu(x,y),x\circ_\mu y)+(x\rhd_{-\l-\pr}a,x\lhd_{-\l-\pr}a)\ast_{\l+\mu}(0,y) \\
&\qquad +(0,x)\ast_\mu(y\rhd_{-\l-\pr}a,y\lhd_{-\l-\pr}a) \\
=& (a_\l \omega_\mu(x,y)+(x\circ_\mu y)\rhd_{-\l-\pr}a,(x\circ_\mu y)\lhd_{-\l-\pr}a) \\
+&(y\rhd_{-\l-\mu-\pr}(x\rhd_{-\l-\pr}a)+\omega_{\l+\mu}(x\lhd_{-\l-\pr}a,y),
y\lhd_{-\l-\mu-\pr}(x\rhd_{-\l-\pr}a)+(x\lhd_{-\l-\pr}a)\circ_{\l+\mu}y)\\
+&(x\rhd_\mu(y\rhd_{-\l-\pr}a)+\omega_\mu(x,y\lhd_{-\l-\pr}a),
x\lhd_\mu(y\rhd_{-\l-\pr}a)+x\circ_\mu(y\lhd_{-\l-\pr}a)).
\end{align*}}
Then $(a,0)\ast_\l((0,x)\ast_\mu (0,y))+((a,0)\ast_\l (0,x))\ast_{\l+\mu}(0,y) 
 + (0,x)\ast_\mu((a,0)\ast_\l (0,y))=0$ if and only if
 \begin{align*}
   &  (a_\l \omega_\mu(x,y)+(x\circ_\mu y)\rhd_{-\l-\pr}a
+(y\rhd_{-\l-\mu-\pr}(x\rhd_{-\l-\pr}a) \\
+&\omega_{\l+\mu}(x\lhd_{-\l-\pr}a,y)
+(x\rhd_\mu(y\rhd_{-\l-\pr}a)+\omega_\mu(x,y\lhd_{-\l-\pr}a)=0
 \end{align*}
 and
  \begin{align*}
 &    (x\circ_\mu y)\lhd_{-\l-\pr}a) +
y\lhd_{-\l-\mu-\pr}(x\rhd_{-\l-\pr}a)+(x\lhd_{-\l-\pr}a)\circ_{\l+\mu}y) \\
+& x\lhd_\mu(y\rhd_{-\l-\pr}a)+x\circ_\mu(y\lhd_{-\l-\pr}a))=0.
 \end{align*}
Thus ${\rm (U_4)}$ and ${\rm (U_5)}$ hold.
Finally, we prove that \eqref{F} holds for $(0,x), (0,y), (0,z)$ if and only if ${\rm (U_6)}$ and ${\rm (U_7)}$ hold. Indeed, we have
\begin{align*}
     &(0,x)\ast_\l((0,y)\ast_\mu (0,z))+((0,x)\ast_\l (0,y))\ast_{\l+\mu}(0,z) 
 + (0,y)\ast_\mu((0,x)\ast_\l (0,z)) \\
 =& (0,x)\ast_\l(\omega_\mu(y,z),y\circ_\mu z)+ (\omega_\l(x,y),x\circ_\l y)\ast_{\l+\mu}(0,z) 
 +(0,y)\ast_\mu(\omega_\l(x,z),x\circ_\l z) \\
 =& (x\rhd_\l \omega_\mu(y,z)+\omega_\l(x, y\circ_\mu z),
 x\lhd_\l \omega_\mu(y,z)+x\circ_\l (y\circ_\mu z)) \\
 + & (y\rhd_{-\l-\mu-\pr}\omega_\l(x,y)+\omega_{\l+\mu}(x\circ_\l y,z), y\lhd_{-\l-\mu-\pr}\omega_\l(x,y) + (x\circ_\l y)_{\l+\mu}z) \\
 +& (y\rhd_\mu \omega_\l(x,z)+\omega_\mu(y,x \circ_\l z),
 y\lhd_\mu \omega_\l(x,z)+ y \circ_\mu(x\circ_\l z)).
\end{align*}
Therefore 
$(0,x)\ast_\l((0,y)\ast_\mu (0,z))+((0,x)\ast_\l (0,y))\ast_{\l+\mu}(0,z) 
 + (0,y)\ast_\mu((0,x)\ast_\l (0,z))=0$ if and only if 
 \begin{align*}
  &   x\rhd_\l \omega_\mu(y,z)+\omega_\l(x, y\circ_\mu z)
 +  y\rhd_{-\l-\mu-\pr}\omega_\l(x,y)+\omega_{\l+\mu}(x\circ_\l y,z) \\
&\hspace{2 cm} + y\rhd_\mu \omega_\l(x,z)+\omega_\mu(y,x \circ_\l z)=0
\end{align*} 
and 
\begin{align*}
 & x\lhd_\l \omega_\mu(y,z)+x\circ_\l (y\circ_\mu z) 
 +  y\lhd_{-\l-\mu-\pr}\omega_\l(x,y) + (x\circ_\l y)_{\l+\mu}z  \\
& \hspace{2 cm} +
 y\lhd_\mu \omega_\l(x,z)+ y \circ_\mu(x\circ_\l z)=0.
 \end{align*} 
Hence the proof is finished. 
\end{proof}
In the sequel, we use the following convention: if one of the maps $\lhd_\l$, $\rhd_\l$, $\omega_\l$, or $\circ_\l$ of
an extending datum is trivial, we will omit it from 
$\mathcal U(J,K) = \{\lhd_\l, \rhd_\l, \omega_\l, \circ_\l \}$. 

From now on, in light of Theorem \ref{unified products}, a Jacobi-Jordan conformal extending structure of
$J$ through $K$ will be viewed as a system
$\mathcal U(J,K) =  \{\lhd_\l, \rhd_\l, \omega_\l, \circ_\l \}$ satisfying the compatibility conditions
$({\rm U}_1)$-$({\rm U}_7)$. 

\begin{ex}
Let $\mathcal U(J,K) =  \{\lhd_\l, \rhd_\l, \omega_\l, \circ_\l \}$ be an extending datum of a Jacobi-Jordan conformal algebra $J$ through a $\mathbb F [\pr]$-module $K$, where $\rhd_\l, \omega_\l$ and $\circ_\l$ are trivial conformal bilinear maps. 
Then the corresponding extending datum denoted by $\mathcal U(J,K)=\{\lhd_\l \}$ is a Jacobi-Jordan conformal extending structure of $J$ by $K$ if and only if $K$ is a right $K$-module under $\rhd_\l$. Therefore, the associated unified product is just the semi-direct product  of $J$ and $K$. 
\end{ex}
Given an extending structure $\mathcal U(J,K )$. It is obvious that $J$ can be seen as a Jacobi-Jordan conformal 
subalgebra of $J \natural K$. Conversely, we will prove that any Jacobi-Jordan conformal  algebra structure on a
vector space E containing $J$ as a subalgebra is isomorphic to a unified product.
\begin{thm}\label{reciproque}
    Let $J$ be a Jacobi-Jordan conformal algebra and $K$ an $\mathbb F[\pr]$-module. Set $E=J \oplus K$, where the direct sum is the sum of $\mathbb F[\pr]$-modules. Suppose that $E$ has a Jacobi-Jordan  conformal algebra structure $\ast_\l$ such that $J$ is a subalgebra. Then 
there exists a Jacobi-Jordan conformal   extending structure  $\mathcal U(J,K) =  \{\lhd_\l, \rhd_\l, \omega_\l, \circ_\l \}$ of $J$ by $K$ and an isomorphism of Jacobi-Jordan conformal algebra $E \cong J \natural K$ which stabilizes $J$ and co-stabilizes $K$. 
\end{thm}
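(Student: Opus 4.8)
The plan is to mimic the classical Agore--Militaru style argument for extending structures, transported to the conformal setting. Since $E = J \oplus K$ as $\mathbb{F}[\pr]$-modules and $J$ is a Jacobi-Jordan conformal subalgebra, the natural projection $p: E \to J$ onto the first component need not be a morphism of conformal algebras, but its kernel is precisely $K$ (as an $\mathbb{F}[\pr]$-module). First I would define the four maps of the would-be extending datum by decomposing the $\ast_\l$-product along $E = J \oplus K$. Explicitly, for $x, y \in K$ and $b \in J$, write $x \ast_\l b = p(x\ast_\l b) + q(x \ast_\l b)$ where $q: E \to K$ is the projection onto the second component, and set
\begin{align*}
x \rhd_\l b &= p(x \ast_\l b) \in J[\l], & x \lhd_\l b &= q(x \ast_\l b) \in K[\l],\\
\omega_\l(x,y) &= p(x \ast_\l y) \in J[\l], & x \circ_\l y &= q(x \ast_\l y) \in K[\l].
\end{align*}
One checks these are conformal bilinear maps because $\ast_\l$ is and because $p, q$ are $\mathbb{F}[\pr]$-module homomorphisms; this uses conformal sesquilinearity of $\ast_\l$ together with $p(\pr e) = \pr p(e)$, and similarly for $q$.

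Next I would verify that with this datum, the identity map $\mathrm{Id}: J \natural K \to E$, $(a, x) \mapsto a + x$, is an isomorphism of Jacobi-Jordan conformal algebras. It is clearly an isomorphism of $\mathbb{F}[\pr]$-modules, so the only content is that it intertwines the two $\l$-products. By bilinearity it suffices to check this on the four types of pairs $(a,0)\ast_\l(b,0)$, $(a,0)\ast_\l(0,y)$, $(0,x)\ast_\l(b,0)$, $(0,x)\ast_\l(0,y)$, matching the four identities listed in the Remark following the definition of the unified product. The first is immediate since $J$ is a subalgebra of $(E, \ast_\l)$. For the mixed terms one uses commutativity \eqref{commutativity} in $(E,\ast_\l)$: for instance $a \ast_\l y = y \ast_{-\l-\pr} a = p(y\ast_{-\l-\pr}a) + q(y\ast_{-\l-\pr}a) = y\rhd_{-\l-\pr}a + y\lhd_{-\l-\pr}a$, which is exactly the image of $(a,0)\ast_\l(0,y)$ under $\mathrm{Id}$. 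The last two are read off directly from the definitions of $\rhd_\l, \lhd_\l, \omega_\l, \circ_\l$. Transporting the Jacobi-Jordan conformal algebra structure of $(E,\ast_\l)$ along this bijection then forces $J \natural K$ with the datum $\mathcal{U}(J,K)$ to be a Jacobi-Jordan conformal algebra, so by Theorem \ref{unified products} the datum automatically satisfies $({\rm U}_1)$--$({\rm U}_7)$ and is a genuine Jacobi-Jordan conformal extending structure. Finally, $\mathrm{Id}$ visibly stabilizes $J$ (it restricts to the identity on the $J$-component, so the left square commutes) and co-stabilizes $K$ (the right square commutes since $\pi \circ \mathrm{Id} = \pi$ on the $K$-component).

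The main subtlety — though not really an obstacle — is the bookkeeping needed to see that the maps $\rhd_\l, \lhd_\l, \omega_\l, \circ_\l$ land in the correct $\mathbb{F}[\pr]$-module tensored with $\mathbb{F}[\l]$ and are conformal bilinear in the precise sense required by the definition of an extending datum; this is where conformal sesquilinearity of $\ast_\l$ and the $\mathbb{F}[\pr]$-linearity of the projections $p, q$ must be combined carefully, keeping track of the $-\l-\pr$ substitutions that arise from commutativity. Once the datum is shown to consist of conformal bilinear maps, everything else is formal: the identity map is a conformal-algebra isomorphism by construction, and Theorem \ref{unified products} does all the work of producing the axioms $({\rm U}_1)$--$({\rm U}_7)$ for free. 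Thus no direct verification of $({\rm U}_1)$--$({\rm U}_7)$ is needed in this proof — they follow a posteriori from the existence of the isomorphism.
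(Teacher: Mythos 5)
Your proposal is correct and follows essentially the same route as the paper: define $\rhd_\l,\lhd_\l,\omega_\l,\circ_\l$ via the projection $p$ (and $q=\mathrm{id}-p$) applied to the restriction of $\ast_\l$, show the canonical bijection $(a,x)\mapsto a+x$ transports $\ast_\l$ to exactly the unified product formula \eqref{ast product} (using commutativity for the mixed terms), and deduce the axioms $({\rm U}_1)$--$({\rm U}_7)$ a posteriori from Theorem \ref{unified products}, together with the stabilization and co-stabilization of $J$ and $K$. The paper's proof is the same argument, merely written by transporting the product through $\psi^{-1}$ and computing $(a,x)\square_\l(b,y)$ in one display rather than checking on the four types of generating pairs.
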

\begin{proof}
  Since $E=J \oplus K$, there exists a natural  $\mathbb F[\pr]$-homomorphism $p: E \to J$ such that $p(a)=a$, for any $a \in J$.  Then, we can define an extending datum  $\mathcal U(J,K) =  \{\lhd_\l, \rhd_\l, \omega_\l, \circ_\l \}$ of $J$ by $K$ as follows ($a \in J$ and $x,y \in K$)
  \begin{align}
\rhd_\l : K \times J \to
J[\l], \qquad x \rhd_\l  a =& p (x \ast_\l a ) \label{bala1}\\
\lhd_\l : K \times J \to K[\l],
\qquad x \lhd_\l a =& x \ast_\l a - p (x \ast_\l  a) \label{bala2} \\
\omega_\l : K \times K \to J[\l], \qquad \omega_\l(x, y) =&
p (x \ast_\l y)  \label{bala3} \\
\circ_\l: K \times K \to K[\l], \qquad x\circ_\l \, y =&
x\ast_\l  y  - p (x \ast_\l  y). \label{bala4}
\end{align}
  Obviously, $\lhd_\l$, $\rhd_\l$, $\omega_\l$ and  $\circ_\l$ are four conformal bilinear maps. Next, we shall prove that  $\mathcal U(J,K) =  \{\lhd_\l, \rhd_\l, \omega_\l, \circ_\l \}$ of $J$ is a Jacobi-Jordan conformal extending structure and $E  \cong J \natural K$ as Jacobi-Jordan conformal algebras. 
  Set $\psi: J \times K \to E$ by $\psi(a,x)=a+x$. It is easy to see that $\psi $ is an $\mathbb F[\pr]$-module isomorphism. Its inverse is $\psi^{-1}: E \to J \times K$ defined by $\psi^{-1}(e)=(p(e),e-p(e))$. Therefore, if $\psi$ is also an isomorphism of Jacobi-Jordan conformal algebra, there exists a  unique $\l$-product  $\square_\l$ on $J \times K$ given by 
  \begin{align}
      (a,x)\square_\l (b,y)=\psi^{-1}(\psi(a,x)\ast_\l \psi(b,y)), \label{square product}
  \end{align}
  for any $a,b \in J$ and $x,y \in K$.  Hence, for finishing the proof, we only need to prove
that the $\l$-product defined by \eqref{square product} is just the one given by  
\eqref{ast product} associated with
the above extending system $\mathcal U(J,K) =  \{\lhd_\l, \rhd_\l, \omega_\l, \circ_\l \}$.  We check it as follows
\begin{align*}
     &(a,x)\square_\l (b,y)  \\
       =& \psi^{-1}(\psi(a,x)\ast_\l \psi(b,y))    \\
    =& \psi^{-1}((a+x)\ast_\l (b+y)) \\
    =& \psi^{-1}(a \ast_\l b+ a \ast_\l y +x \ast_\l b+ x \ast_\l y ) \\
    =& (p(a \ast_\l b)+ p(a \ast_\l y) +p(x \ast_\l b)+ p(x \ast_\l y), \\
   & a \ast_\l b+ a \ast_\l y +x \ast_\l b+ x \ast_\l y-p(a \ast_\l b)-p( a \ast_\l y) -p(x \ast_\l b)- p(x \ast_\l y)) \\
    =& (a_\l b +p(y \ast_{-\l-\pr} a) +p(x \ast_\l b)+ p(x \ast_\l y), \\
  &  .y \ast_{-\l-\pr} a-p(y \ast_{-\l-\pr} a) +x \ast_\l b-p(x \ast_\l b)+x \ast_\l y-p(x \ast_\l y)) \\
    =& (a_\l b+ y \rhd_{-\l-\pr}a + x \rhd_\l b+ \omega_\l (x,y), 
     y \lhd_{-\l-\pr} a + x \lhd_\l b+ x \circ_\l y ) \\
     =& (a,x) \ast_\l (b,y). 
\end{align*}
Moreover, it is easy to see that $\psi$ stabilizes $J$ and co-stabilizes $K$.
Then, the proof is finished. 
\end{proof}

\begin{defi}
Let $J$ be a Jacobi-Jordan conformal algebra and $K$ an $\mathbb F[\pr]$-module. Two Jacobi-Jordan conformal algebra extending structures of $J$ by $K$, $\mathcal U(J,K) =  \{\lhd_\l, \rhd_\l, \omega_\l, \circ_\l \}$ and $\mathcal U'(J,K) =  \{\lhd'_\l, \rhd'_\l, \omega'_\l, \circ'_\l \}$ are called {\it equivalent} and we denote this by $\mathcal U(J,K) \equiv \mathcal U'(J,K)$, if there exists a pair of $\mathbb F [\pr]$-module homomorphisms $(r,s)$, where $r: K \to J$ and $s \in {\rm Aut}_{\mathbb F[\pr]} (K)$ such that $\{\lhd'_\l, \rhd'_\l, \omega'_\l, \circ'_\l \}$ is implemented from  $\{\lhd_\l, \rhd_\l, \omega_\l, \circ_\l \}$ using $(r,s)$ via:
{\small
\begin{align}
&    x \rhd_{-\l-\pr} a+ r(x \lhd_{-\l-\pr} a)=a_\l r(x)+s(x) \rhd'_{-\l-\pr} a  ,  \label{rs1} \\
  &   s(x \lhd_{-\l-\pr} a)=  s(x) \lhd'_{-\l-\pr} a ,  \label{rs2} \\
 &   \omega_\l(x,y)+ r(x \circ_\l y)=  r(x)_\lambda r(y)+s(x)\rhd'_\lambda r(y)+s(y) \rhd'_{-\l-\pr}r(x) +\omega'_\lambda(s(x),s(y)) ,   \label{rs3}\\
   &   s(x \circ_\l y)=  s(x)\lhd'_\lambda r(y)+s(y) \lhd'_{-\l-\pr}r(x) +s(x)\circ'_\lambda s(y) , \label{rs4}
\end{align}  }
for any $a \in J$ and $x,y \in K$. 
In particular, if $s=id$, $\mathcal U(J,K)$ and  $\mathcal U'(J,K)$  are called {\it cohomologous} and we denote it by $\mathcal U(J,K) \approx
 \mathcal U'(J,K)$. 
\end{defi}

Theorem \ref{reciproque} shows that the classification of all Jacobi-Jordan conformal  algebra structures on $E$ that
contain $J$ as a subalgebra comes down to the classification of the unified products $J \natural K$
for a given complement $K$ of $J$ in $E$. In order to describe the classifying sets $Jextd (E,J)$, we need the following:

\begin{lem}
Suppose that $\mathcal U(J,K) =  \{\lhd_\l, \rhd_\l, \omega_\l, \circ_\l \}$ and $\mathcal U'(J,K) =  \{\lhd'_\l, \rhd'_\l, \omega'_\l, \circ'_\l \}$ are two Jacobi-Jordan conformal extending structures of $J$ by $K$. Let $J \natural K$ and $J \natural' K$ be the corresponding unified products. Then $J \natural K\equiv  J \natural' K$ if and only if $\mathcal U(J,K) \equiv \mathcal U'(J,K)$ and  $J \natural K \approx  J \natural' K$ if and only if $\mathcal U(J,K) \approx \mathcal U'(J,K)$.
\end{lem}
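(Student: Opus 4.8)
The plan is to prove both equivalences simultaneously by tracing through the definitions of $\equiv$ and $\approx$ and matching them against the algebraic conditions \eqref{rs1}--\eqref{rs4}. Recall from Theorem \ref{reciproque} that $J \natural K$ and $J \natural' K$ are Jacobi-Jordan conformal algebra structures on $E = J \oplus K$ both containing $J$ as a subalgebra; so $J \natural K \equiv J \natural' K$ means precisely that there is a Jacobi-Jordan conformal algebra isomorphism $\varphi : J \natural K \to J \natural' K$ which stabilizes $J$, i.e. $\varphi(a,0) = (a,0)$ for all $a \in J$, and $J \natural K \approx J \natural' K$ means moreover $\varphi$ co-stabilizes $K$, i.e. $\pi \circ \varphi = \pi$ where $\pi$ is the projection onto $K$.

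First I would establish the normal form of such a $\varphi$. Since $\varphi$ is an $\mathbb F[\pr]$-module homomorphism stabilizing $J$, for $x \in K$ we may write $\varphi(0,x) = (r(x), s(x))$ with $r : K \to J$ and $s : K \to K$ two $\mathbb F[\pr]$-module homomorphisms, so that $\varphi(a,x) = (a + r(x), s(x))$. The condition that $\varphi$ be bijective is then equivalent to $s \in \mathrm{Aut}_{\mathbb F[\pr]}(K)$ (one checks directly that $(a,x) \mapsto (a - r(s^{-1}(x)), s^{-1}(x))$ is the inverse once $s$ is invertible, and conversely surjectivity onto $\{0\}\oplus K$ forces $s$ surjective, injectivity forces $s$ injective). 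Co-stabilizing $K$ amounts exactly to $s = \mathrm{id}$. So the data of $\varphi$ is the same as the data of an equivalence pair $(r,s)$ as in the definition.

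Next I would impose that $\varphi$ is an algebra morphism, i.e. $\varphi\big((a,x)\ast_\l(b,y)\big) = \varphi(a,x)\,\square_\l\,\varphi(b,y)$ where $\ast_\l$ is the product of $J\natural K$ and $\square_\l$ that of $J \natural' K$. By $\mathbb F[\pr]$-bilinearity and conformal sesquilinearity it suffices to test this on the generators $(a,0)$ and $(0,x)$. The pair $(a,0),(b,0)$ gives $(a_\l b, 0) = (a_\l b,0)$, automatically true. The pair $(0,x),(b,0)$: the left side is $\varphi(x\rhd_\l b, x\lhd_\l b) = (x\rhd_\l b + r(x\lhd_\l b),\, s(x\lhd_\l b))$ and the right side is $(r(x),s(x))\,\square_\l\,(b,0) = (s(x)\rhd'_\l b + r(x)_\l b + \text{(pieces from $\square$ of }(r(x),0)\text{ and }(b,0)) , s(x)\lhd'_\l b)$; comparing the $J$-components yields \eqref{rs1} and the $K$-components yield \eqref{rs2} (after the substitution $b \mapsto$ appropriate bracketed argument; here the $-\l-\pr$ shifts in \eqref{rs1},\eqref{rs2} come from evaluating on $(b,0)\ast$ rather than $\ast(b,0)$, using commutativity $(a,x)\ast_\l(b,y) = (b,y)\ast_{-\l-\pr}(a,x)$). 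The pair $(0,x),(0,y)$ similarly produces, after comparing $J$- and $K$-components, exactly \eqref{rs3} and \eqref{rs4}. Conversely, if \eqref{rs1}--\eqref{rs4} hold then the same computations run backwards show $\varphi$ is multiplicative on generators, hence everywhere. This gives $J\natural K \equiv J\natural' K \iff \mathcal U(J,K)\equiv \mathcal U'(J,K)$, and specializing to $s = \mathrm{id}$ gives the $\approx$ statement.

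The main obstacle I anticipate is purely bookkeeping: getting the $-\l-\pr$ spectral shifts consistent throughout, since the products $\ast_\l$ and $\square_\l$ are not symmetric but only conformally commutative, so one must be careful which argument of the bilinear maps $\rhd,\lhd,\omega,\circ$ sits in which slot when expanding $(0,x)\ast_\l(b,0)$ versus $(b,0)\ast_{-\l-\pr}(0,x)$, and likewise when expanding $\square_\l$ of $(r(x),0)$ against $(b,0)$ inside $J$ (which contributes the term $r(x)_\l b$). Verifying that the four resulting scalar identities match \eqref{rs1}--\eqref{rs4} term-by-term, with no leftover terms, is where all the actual work lies; everything else is formal.
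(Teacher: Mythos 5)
Your proposal is correct and follows essentially the same route as the paper: write the stabilizing isomorphism in the normal form $\varphi(a,x)=(a+r(x),s(x))$, check multiplicativity only on the generators $(a,0)$ and $(0,x)$, and match the $J$- and $K$-components of the two pair types against \eqref{rs1}--\eqref{rs4}, with $s=\mathrm{id}$ giving the $\approx$ case. The only differences are cosmetic (you test $(0,x)\ast_\l(b,0)$ and invoke conformal commutativity where the paper computes $(a,0)\ast_\l(0,x)$ directly) plus your explicit remark that bijectivity of $\varphi$ is equivalent to $s\in\mathrm{Aut}_{\mathbb F[\pr]}(K)$, a point the paper leaves implicit.
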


\begin{proof}
Let $\phi: J \natural K \to J \natural' K$ be an isomorphism of Jacobi-Jordan conformal algebras which stabilizes $J$. According to that $\phi$ stabilizes $J$, $\phi(a,0)=(a,0)$. Moreover, we can set $\phi(0,x)=(r(x),s(x))$, where $r: K \to J$ and $s: K \to K$ are two linear maps. Therefore, we get $\phi(a,x)=(a+r(x),s(x))$.
Furthermore, it is easy to see that $\phi$ is an $\mathbb F[\pr]$-module homomorphism if and only if $r$ and $s$ are two $\mathbb F[\pr]$-module homomorphisms. 
Then, we shall prove that $\phi$ is a Jacobi-Jordan conformal algebra homomorphism, i.e.
\begin{align}
 \phi((a,x)\ast_\l (b,y))=\phi(a,x) \ast'_\l \phi(b,y),    \label{fi morph}
\end{align}
for any $a,b \in J$ and $x,y \in K$ if and only if Eqs. \eqref{rs1}-\eqref{rs4} hold. It is enough to check that \eqref{fi morph} holds for all generators of $J \natural K$. Obviously, \eqref{fi morph}  holds for the pair $(a,0)$, $(b,0)$. Then, we consider \eqref{fi morph}  for the pair $(a,0)$, $(0,x)$. According to 
\begin{align*}
     \phi((a,0)\ast_\l (0,x))= &\phi(x \rhd_{-\l-\pr} a, x \lhd_{-\l-\pr} a) \\
    =& (x \rhd_{-\l-\pr} a+ r(x \lhd_{-\l-\pr} a),s(x \lhd_{-\l-\pr} a) )
\end{align*}
 and 
 \begin{align*}
     \phi(a,0) \ast'_\l \phi(0,x)=& (a,0) \ast'_\l (r(x),s(x)) \\
     =& (a_\l r(x)+s(x) \rhd'_{-\l-\pr} a, s(x) \lhd'_{-\l-\pr} a),
 \end{align*}
 we get that \eqref{fi morph} holds for the pair $(a,0)$, $(0,x)$ if and only if 
 \eqref{rs1} and \eqref{rs2} hold.  Next, we consider  \eqref{fi morph}  for the pair $(0,x)$, $(0,y)$. We have 
 \begin{align*}
    \phi((0,x)\ast_\l (0,y))=& \phi(\omega_\l(x,y),x \circ_\l y) \\
    =& (\omega_\l(x,y)+ r(x \circ_\l y), s(x \circ_\l y) )
 \end{align*}
and 
 \begin{align*}
      \phi(0,x) \ast'_\l \phi(0,y) =& (r(x),s(x))\ast'_\l (r(y),s(y)) \\
      =& (r(x)_\lambda r(y)+s(x)\rhd'_\lambda r(y)+s(y) \rhd'_{-\l-\pr}r(x) +\omega'_\lambda(s(x),s(y)) ,\\
      &s(x)\lhd'_\lambda r(y)+s(y) \lhd'_{-\l-\pr}r(x) +s(x)\circ'_\lambda s(y) ),
 \end{align*}
 which means that \eqref{fi morph} holds if and only if \eqref{rs3} and \eqref{rs4} hold. 
\end{proof}

By putting together all the results proved in this section we obtain the following theoretical answer to the extending structures problem for Jacobi-Jordan conformal  algebras
\begin{thm}
Let $J$ be a Jacobi-Jordan conformal algebra and $K$ be an $\mathbb F[\pr]$-module. Set $E=J \oplus K$, where the direct sum is the sum of $\mathbb F[\pr]$-modules. Then, we get:    
\begin{enumerate}
    \item  Denote $\mathcal H^2_J(K,J):=\mathcal S(J,K)/\equiv$. Then the map 
   $$\mathcal H^2_J(K,J) \to Extd(E,J),\quad  \overline{\mathcal U(J,K)} \mapsto (J \natural K,\ast_\l)$$  
   is bijective, where $\overline{\mathcal U(J,K)}$ is the equivalence class of  $\mathcal U(J,K)$ with respect to  $\equiv$. 
    \item  Denote $\mathcal H^2(K,J):=\mathcal S(J,K)/\approx$. Then the map 
   $$\mathcal H^2(K,J) \to Extd(E,J),\quad  \overline{\overline{\mathcal U(J,K)}} \mapsto (J \natural K,\ast_\l)$$  
   is bijective, where $\overline{\overline{\mathcal U(J,K)}}$ is the equivalence class of  $\mathcal U(J,K)$ with respect to $\approx$. 
\end{enumerate}
\end{thm}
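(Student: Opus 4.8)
The plan is to show that the assignment $\overline{\mathcal U(J,K)} \mapsto (J\natural K,\ast_\l)$ is a well-defined bijection by assembling the three main results of this section: Theorem \ref{unified products} (which tells us precisely when $J\natural K$ is a Jacobi-Jordan conformal algebra), Theorem \ref{reciproque} (which realizes every relevant structure on $E$ as a unified product), and the Lemma immediately preceding the statement (which translates $\equiv$ and $\approx$ on unified products into $\equiv$ and $\approx$ on extending structures). First I would observe that, as an $\mathbb F[\pr]$-module, $J\natural K$ coincides with $E=J\oplus K$, and by Theorem \ref{unified products} the $\ast_\l$-product makes it a Jacobi-Jordan conformal algebra having $J=J\oplus 0$ as a conformal subalgebra; thus $(J\natural K,\ast_\l)$ genuinely represents an element of $Extd(E,J)$, the relation used to form the classifying set being $\equiv$ in item $(1)$ and $\approx$ in item $(2)$.

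For well-definedness I would invoke the Lemma: if $\mathcal U(J,K)\equiv \mathcal U'(J,K)$, then $J\natural K\equiv J\natural' K$, so both unified products lie in the same $\equiv$-class in $Extd(E,J)$, and hence the value $\overline{\mathcal U(J,K)}\mapsto (J\natural K,\ast_\l)$ does not depend on the chosen representative; replacing $\equiv$ by $\approx$ throughout gives well-definedness of the map on $\mathcal H^2(K,J)=\mathcal S(J,K)/\!\approx$ sending $\overline{\overline{\mathcal U(J,K)}}$ to $(J\natural K,\ast_\l)$.

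Surjectivity is essentially the content of Theorem \ref{reciproque}: given any Jacobi-Jordan conformal algebra structure $\ast_\l$ on $E$ containing $J$ as a subalgebra, that theorem produces a Jacobi-Jordan conformal extending structure $\mathcal U(J,K)\in\mathcal S(J,K)$ together with an isomorphism $E\cong J\natural K$ that stabilizes $J$ and co-stabilizes $K$. Since this isomorphism stabilizes $J$ we get $(E,\ast_\l)\equiv (J\natural K,\ast_\l)$, so the class of $(E,\ast_\l)$ is the image of $\overline{\mathcal U(J,K)}$; since it moreover co-stabilizes $K$ we also get $(E,\ast_\l)\approx (J\natural K,\ast_\l)$, which is exactly what is needed for item $(2)$. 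For injectivity, suppose $\overline{\mathcal U(J,K)}$ and $\overline{\mathcal U'(J,K)}$ have the same image, i.e.\ $(J\natural K,\ast_\l)$ and $(J\natural' K,\ast'_\l)$ are equivalent structures on $E$ (stabilizing $J$); then the ``only if'' direction of the Lemma yields $\mathcal U(J,K)\equiv\mathcal U'(J,K)$, so the two classes coincide in $\mathcal H^2_J(K,J)$, and the argument for $\approx$ and $\mathcal H^2(K,J)$ is identical. Combining these three steps establishes the bijections in both items.

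The only genuinely delicate point — and it has already been dealt with inside Theorem \ref{reciproque} — is that the isomorphism realizing a prescribed structure on $E$ as a unified product can be chosen to stabilize $J$ \emph{and} co-stabilize $K$ at the same time; this is what makes surjectivity work uniformly for both the coarser relation $\equiv$ and the finer relation $\approx$, and hence makes the two items fit together via the canonical projection $\mathcal H^2(K,J)\twoheadrightarrow\mathcal H^2_J(K,J)$. Beyond this observation the proof is purely formal and requires no new computation, since every identity has already been verified in Theorem \ref{unified products} and in the preceding Lemma.
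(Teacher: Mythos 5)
Your proposal is correct and follows exactly the route the paper intends: the theorem is obtained by assembling Theorem \ref{unified products}, Theorem \ref{reciproque} (surjectivity, using that the isomorphism stabilizes $J$ and co-stabilizes $K$), and the preceding Lemma (well-definedness and injectivity for both $\equiv$ and $\approx$). No gaps; this matches the paper's "putting together all the results proved in this section" argument.
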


\subsection{Applications to special cases of unified products}
In this section, we introduce some interesting products of Jacobi-Jordan  conformal algebras such as twisted products, crossed products and bicrossed products which are all special cases of unified products. These products will be useful for studying the structure theory of Jacobi-Jordan  conformal algebras.
\subsubsection{Twisted products of Jacobi-Jordan conformal algebras}
Let $\mathcal U(J,K) =  \{\lhd_\l, \rhd_\l, \omega_\l, \circ_\l \}$ be an extending datum of a Jacobi-Jordan conformal algebra $J$ through an $\mathbb F [\pr]$-module $K$, where $\rhd_\l$ and $\lhd_\l$ are trivial conformal bilinear maps.
Then by Theorem \ref{unified products},  $\mathcal U(J,K) =  \{ \omega_\l, \circ_\l \}$ is a Jacobi-Jordan conformal extending structure of $J$ by $K$ if and only if $(K,\circ_\l)$ is a Jacobi-Jordan conformal algebra and $\omega_\l: K \times K \to J[\l]$ satisfies 
\begin{align}
    & \omega_\l(x,y)=\omega_{-\l-\pr}(y,x)  ,\\
    & \omega_\l(x, y\circ_\mu z)
 + \omega_{\l+\mu}(x\circ_\l y,z) 
 +\omega_\mu(y,x \circ_\l z)=0,
\end{align}
for any $a \in J$ and $x,y,z \in K$. The associated unified product $J\natural K $
is called the {\it twisted  product} of $J$ and $K$. The product on $J \natural K$ is given, for any $a,b \in J$ and $x,y \in K$ by
\begin{align}
(a,x)\ast_\lambda (b,y)
=(a_\lambda b +\omega_\lambda(x,y),x\circ_\lambda y).  
\label{ast twisted product}
\end{align}

\subsubsection{Crossed products of Jacobi-Jordan conformal algebras}
Let $\mathcal U(J,K) =  \{\lhd_\l, \rhd_\l, \omega_\l, \circ_\l \}$ be an extending datum of a Jacobi-Jordan conformal algebra $J$ through an $\mathbb F [\pr]$-module $K$, where $\lhd_\l$ is the trivial conformal bilinear map. Then $\mathcal U(J,K)$ is a Jacobi-Jordan conformal extending structure of $J$ through $K$ if and only if 
$(K,\circ_\l)$ is a Jacobi-Jordan conformal algebra and the following conditions are satisfied:
\begin{align}
  &      \omega_\l(x,y)=\omega_{-\l-\pr}(y,x),\\
  &      a_\l(x\rhd_{-\mu-\pr}b)
 + x\rhd_{-\l-\mu-\pr}(a_\l b)  + b_\mu(x\rhd_{-\l-\pr}a)=0,  \\
&    a_\l \omega_\mu(x,y)+(x\circ_\mu y)\rhd_{-\l-\pr}a
+y\rhd_{-\l-\mu-\pr}(x\rhd_{-\l-\pr}a) 
+x\rhd_\mu(y\rhd_{-\l-\pr}a)=0,  \\
  &        x\rhd_\l \omega_\mu(y,z)+\omega_\l(x, y\circ_\mu z)
 +  y\rhd_{-\l-\mu-\pr}\omega_\l(x,y)+\omega_{\l+\mu}(x\circ_\l y,z) \\
&\hspace{2 cm} + y\rhd_\mu \omega_\l(x,z)+\omega_\mu(y,x \circ_\l z)=0
\end{align}
for any $a,b \in J$ and $x,y,z \in K$. The associated unified product is  denoted by $J\natural^\omega_\rhd K $
and  called the crossed product of $J$ through $K$. The $\lambda$-product  on $J\natural^\omega_\rhd K $ is given by, for any $a,b\in J$ and $x,y\in K,$
$$(a,x)\ast_\lambda (b,y)
=(a_\lambda b+x\rhd_\lambda b+y \rhd_{-\l-\pr}a +\omega_\lambda(x,y), x\circ_\lambda y). $$
It is obvious that $J$ is an ideal of $J\natural^\omega_\rhd K $ and one  can directly obtain by Theorem \ref{unified products} the following result:
\begin{pro}
Let $J$ be a Jacobi-Jordan  conformal algebra and $K$ a $\mathbb C[\pr]$-module. Set $E = J\oplus K$ where the direct sum is the sum of $\mathbb C[\pr]$-modules. Assume that $E$ has a Jacobi-Jordan  conformal
algebra structure $\ast_\lambda$ such that $J$ is an ideal of $E$. Then, $E$ is isomorphic to a crossed
product $J\natural^\omega_\rhd K $ of Jacobi-Jordan conformal algebras.
\end{pro}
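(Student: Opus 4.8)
## Proof proposal

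The plan is to invoke Theorem \ref{reciproque} together with the specialization of Theorem \ref{unified products} to the crossed-product case. First I would start from a Jacobi-Jordan conformal algebra structure $\ast_\lambda$ on $E = J \oplus K$ in which $J$ is an ideal (not merely a subalgebra). By Theorem \ref{reciproque}, there is a Jacobi-Jordan conformal extending structure $\mathcal U(J,K) = \{\lhd_\lambda, \rhd_\lambda, \omega_\lambda, \circ_\lambda\}$ of $J$ by $K$ and an isomorphism $E \cong J \natural K$ of Jacobi-Jordan conformal algebras which stabilizes $J$ and co-stabilizes $K$. So the only thing that needs to be extracted is the extra information coming from the stronger hypothesis that $J$ is an \emph{ideal} of $E$, rather than just a subalgebra.

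The key step is to show that this hypothesis forces $\lhd_\lambda$ to be the trivial map. Recall from the proof of Theorem \ref{reciproque} that $x \lhd_\lambda a = x \ast_\lambda a - p(x \ast_\lambda a)$, where $p: E \to J$ is the canonical projection. If $J$ is an ideal of $E$, then for any $x \in K$ and $a \in J$ we have $x \ast_\lambda a \in J[\lambda]$, hence $p(x \ast_\lambda a) = x \ast_\lambda a$, and therefore $x \lhd_\lambda a = 0$. Thus the extending datum produced by Theorem \ref{reciproque} is of the form $\mathcal U(J,K) = \{\rhd_\lambda, \omega_\lambda, \circ_\lambda\}$ with $\lhd_\lambda$ trivial. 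Plugging $\lhd_\lambda = 0$ into the compatibility conditions $({\rm U}_1)$--$({\rm U}_7)$ of Theorem \ref{unified products}, conditions $({\rm U}_3)$ and $({\rm U}_5)$ become vacuous, and $({\rm U}_2)$, $({\rm U}_4)$, $({\rm U}_6)$, $({\rm U}_7)$ reduce exactly to the four displayed identities defining a crossed product (with $(K,\circ_\lambda)$ a Jacobi-Jordan conformal algebra, which is what $({\rm U}_7)$ gives when $\lhd_\lambda$ is trivial). Consequently $J \natural K$ is precisely the crossed product $J \natural^\omega_\rhd K$, and the isomorphism $E \cong J \natural^\omega_\rhd K$ from Theorem \ref{reciproque} finishes the argument.

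I do not expect a serious obstacle here; the statement is essentially a corollary of the two main theorems of the section, and the only genuine content is the bookkeeping observation that ``$J$ is an ideal'' translates under the dictionary of Theorem \ref{reciproque} into ``$\lhd_\lambda$ is trivial.'' The mildest point of care is to double-check that the converse formulation in the crossed-product subsection (namely that these four identities, together with $(K,\circ_\lambda)$ being a Jacobi-Jordan conformal algebra, are exactly what $({\rm U}_1)$--$({\rm U}_7)$ collapse to when $\lhd_\lambda = 0$) is used consistently, so that the extending structure we obtain really does lie in the crossed-product family and not in some larger class. Once that is confirmed, the proof is a one-line application: take $\mathcal U(J,K)$ from Theorem \ref{reciproque}, observe $\lhd_\lambda = 0$ because $J \trianglelefteq E$, and conclude $E \cong J \natural^\omega_\rhd K$.
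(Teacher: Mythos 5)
Your proposal is correct and follows exactly the route the paper intends: the paper leaves this as a direct consequence of Theorem \ref{reciproque} and Theorem \ref{unified products}, and your key observation that $J$ being an ideal forces $x \lhd_\lambda a = x \ast_\lambda a - p(x \ast_\lambda a) = 0$, so that the extending datum collapses to the crossed-product family, is precisely the intended bookkeeping step.
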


\end{document}